\documentclass[12pt]{article}
\usepackage[dvips]{color}
\usepackage{geometry}
\usepackage{graphicx}
\usepackage{amsmath,amsthm}
\usepackage{amssymb}
\usepackage{multicol}
\usepackage{ulem}
\usepackage{amsfonts}
\usepackage{mathrsfs}
\usepackage[pdfstartview=FitH]{hyperref}
\geometry{a4paper,left=3cm,right=3cm}
 \newtheorem{thm}{Theorem}[section]
 \newtheorem{cor}[thm]{Corollary}
 \newtheorem{lem}[thm]{Lemma}
 \newtheorem{prop}[thm]{Proposition}
 
 \theoremstyle{definition}
 \newtheorem{dfn}[thm]{Definition}

 \newtheorem{nott}{Notation}
 \newtheorem{rem}{Remark}
\topmargin=-10mm 

\textwidth 15cm \textheight=230mm
  \DeclareMathAlphabet{\mathsfsl}{OT1}{cmss}{m}{sl}

  \newcommand{\RE}{\mathrm{Re}}

 \newcommand{\Rnum}{\mathbb{R}}
 \newcommand{\Cnum}{\mathbb{C}}
 
 \newcommand{\Nnum}{\mathbb{N}}
 \newcommand{\mi}{\mathrm{i}}
 \newcommand{\dif}{\mathrm{d}}
  
 \newcommand{\tensor}[1]{\mathsf{#1}}
 \newcommand{\abs}[1]{\left\vert#1\right\vert}
 \newcommand{\set}[1]{\left\{#1\right\}}
 \newcommand{\norm}[1]{\left\Vert#1\right\Vert}
 \newcommand{\innp}[1]{\langle {#1}\rangle}
 \newcommand{\bilf}[1]{\big( {#1}\big )_{\theta}}
 
\pagestyle{myheadings} \markright {1-dimensional complex Ornstein-Uhlenbeck operator}

\allowdisplaybreaks

\title{On the 1-dimensional complex Ornstein-Uhlenbeck operator}
\author{\rm\small
\noindent CHEN Yong\\
\noindent \footnotesize School of Mathematics and Computing Science, Hunan
University of Science and Technology,\\
\noindent \footnotesize Xiangtan, Hunan, 411201,
P.R.China. chenyong77@gmail.com\\
}
\date{}
\begin{document}
\maketitle
\maketitle \noindent {\bf Abstract } \\
We show that for any fixed $\theta\in(-\frac{\pi}{2},\,0)\cup (0,\,\frac{\pi}{2})$, the 1-dimensional complex Ornstein-Uhlenbeck operator
 \begin{equation*} 
\tilde{\mathcal{L}}_{\theta}= 4\cos\theta \frac{\partial^2}{\partial z\partial \bar{z}}-e^{\mi\theta} z \frac{\partial}{\partial z}-e^{-\mi\theta}\bar{z} \frac{\partial}{\partial \bar{z}},
\end{equation*} 
is a normal (but nonsymmetric) diffusion operator.
\\
\vskip 0.1cm
 \noindent  {\bf Keywords:\,\,} Ornstein-Uhlenbeck semigroup; Complex Hermite Polynomials; Normal operator; Symmetric diffusion operator; Normal diffusion operator.\\
\vskip 0.1cm
 \noindent   {\bf MSC:} 60H10,60H07,60G15.
\maketitle
\section{Introduction}
This paper is a sequel of the previous paper \cite{cl}, in which the aim was to obtain the eigenfunctions of 1-dimensional complex Ornstein-Uhlenbeck operator \cite[Theorem~2.6]{cl}
\begin{equation}\label{ouopert}
\tilde{\mathcal{L}}_{\theta}= 4\cos\theta \frac{\partial^2}{\partial z\partial \bar{z}}-e^{\mi\theta} z \frac{\partial}{\partial z}-e^{-\mi\theta}\bar{z} \frac{\partial}{\partial \bar{z}},
\end{equation} where $\theta\in(-\frac{\pi}{2},\,\frac{\pi}{2})$ is fixed and $\frac{\partial f}{\partial z}=\frac12 (\frac{\partial f}{\partial x}-\mi \frac{\partial f}{\partial y}), \frac{\partial f}{\partial \bar{z}}=\frac12 (\frac{\partial f}{\partial x}+\mi \frac{\partial f}{\partial y})$ are
the Wirtinger derivatives of $f$ at point $z=x+\mi y$ with $x,y\in\Rnum$.
They show that the eigenfunctions are the complex Hermite polynomials and form an orthonormal basis of $L^2(\gamma)$ where $\dif \gamma= \frac{1}{2 \pi}e^{-\frac{x^2+y^2}{2}}\dif x\dif y$ (see Proposition~\ref{lm22} below).

In this paper, we will firstly show that $\tilde{\mathcal{L}}_{\theta}$ can be realized as an unbounded normal operator (see \cite[p368]{ru}) in $L^2(\gamma)$ but nonsymmetric when $\theta\neq 0$. Secondly, we extend the known fact about the 1-dimensional real symmetric diffusion operator \cite{bd,str0,str} to the complex case. Precisely stated, we present the explicit expression of $\tilde{\mathcal{L}}_{\theta}$ in $L^2(\gamma)$ (see Theorem~\ref{exp_normal}) and show that it is a normal diffusion operator (see Theorem~\ref{main_thm}).

This article is organized as follows. Section~\ref{sec2} provides necessary information of complex Hermite polynomials. Section~\ref{normty} contains the proof of the normality of the complex Ornstein-Uhlenbeck semigroup. Section~\ref{sec4} contains the main results on the explicit expression of $\tilde{\mathcal{L}}_{\theta}$ and the property of the normal diffusion operator. Finally, some necessary approximation of identity and N-representation theorem are listed in Appendix.
\section{Preliminaries}\label{sec2}
\begin{dfn}{\bf (Definition of the complex Hermite polynomials \cite[Definition~2.4]{cl})}\label{ijldf}
We call $\partial:=\frac{\partial }{\partial z}$ and $ \bar{\partial}:=\frac{\partial }{\partial \bar{z}}$ the complex annihilation operators.
Let $m,n\in \Nnum$. We define the sequence on $\Cnum$ (or say: $\Rnum^2$)
\begin{align*}
  J_{0,0}(z)&=1,\nonumber\\
  J_{m,n}(z)&=\sqrt{\frac{{2}^{m+n}}{m!n!}}(\partial^*)^m(\bar{\partial}^*)^n 1,\label{itldefn}
\end{align*} where $ (\partial^*\phi)(z)=-\frac{\partial}{\partial \bar{z}}\phi(z)+\frac{z }{2}\phi(z),\, (\bar{\partial}^*\phi)(z)=-\frac{\partial}{\partial {z}}\phi(z)+\frac{\bar{z}}{2 }\phi(z)$ for $\phi\in C^1_{\uparrow}(\Rnum^2)$ (see Definition~\ref{dfn5_1}) are the adjoint of the operators $\partial,\,\bar{\partial} $ in $L^2(\gamma)$ respectively (the complex creation operator).
\end{dfn}
 In \cite[Theorem 2.7, Corollary 2.8]{cl},  the authors show that $ J_{m,n}(z)$ satisfies that:
\begin{prop}\label{lm22}
The complex Hermite polynomials $\set{J_{m,n}(z):\,m,n\in \Nnum}$ form an orthonormal basis of $L^2(\gamma)$ where $\dif \gamma= \frac{1}{2 \pi}e^{-\frac{x^2+y^2}{2}}\dif x\dif y$. Thus, every function $f$ in $L^2(\gamma)$ has a unique series expression
  \begin{equation*}\label{amn0}
    f=\sum_{m=0}^{\infty}\sum_{n=0}^{\infty}b_{m,n} J_{m,n}(z),
  \end{equation*}
  where the coefficients $b_{m,n}$ are given by
  \begin{equation*}\label{amn}
  b_{m,n}=\innp{f,\,J_{m,n}}=\int_{\Rnum^2}f \overline{ J_{m,n}(z)}\dif \gamma.
\end{equation*}
Moreover, for any $\theta\in(-\frac{\pi}{2},\,\frac{\pi}{2})$ and each $m,n\in \Nnum$,
\begin{equation}\label{atheta}
  \tilde{\mathcal{L}}_{\theta} J_{m,n}(z)=-[(m+n)\cos \theta +\mi (m-n)\sin\theta]J_{m,n}(z).
\end{equation}
\end{prop}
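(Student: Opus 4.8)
\emph{Proof proposal.} The cleanest approach is to rewrite $\tilde{\mathcal{L}}_{\theta}$ in terms of the annihilation operators $\partial,\bar{\partial}$ and their $L^2(\gamma)$-adjoints $\partial^{*},\bar{\partial}^{*}$ from Definition~\ref{ijldf}, and then to recognize the $J_{m,n}$ as a two-mode Fock (harmonic-oscillator) basis. First I would record the commutation relations: a direct computation in the Wirtinger calculus gives $[\partial,\partial^{*}]=[\bar{\partial},\bar{\partial}^{*}]=\tfrac12$, while the mixed brackets $[\partial,\bar{\partial}]$, $[\partial,\bar{\partial}^{*}]$ and $[\partial^{*},\bar{\partial}^{*}]$ all vanish. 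Since $\partial^{*}\phi+\bar{\partial}\phi=\tfrac{z}{2}\phi$ by definition, multiplication by $z$ equals $2(\partial^{*}+\bar{\partial})$ as an operator, and likewise $\bar{z}=2(\bar{\partial}^{*}+\partial)$. Substituting these into \eqref{ouopert} and using $\partial\bar{\partial}=\bar{\partial}\partial$ makes the explicit second-order term cancel against the $\partial\bar{\partial}$ contributions of the drift, leaving the compact form
\[
\tilde{\mathcal{L}}_{\theta}=-2e^{\mi\theta}\partial^{*}\partial-2e^{-\mi\theta}\bar{\partial}^{*}\bar{\partial}.
\]

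Next I would set $a=\sqrt{2}\,\partial$, $a^{*}=\sqrt{2}\,\partial^{*}$, $b=\sqrt{2}\,\bar{\partial}$, $b^{*}=\sqrt{2}\,\bar{\partial}^{*}$, so that $[a,a^{*}]=[b,b^{*}]=1$ with all other brackets zero, the constant function $1$ is a common vacuum ($a1=b1=0$), and $J_{m,n}=\tfrac{1}{\sqrt{m!\,n!}}(a^{*})^{m}(b^{*})^{n}1$. This is exactly the standard two-mode Fock basis. Orthonormality then follows by the usual manoeuvre of commuting every annihilation operator to the right until it meets the vacuum: $\innp{J_{m,n},J_{m',n'}}=\tfrac{1}{\sqrt{m!\,n!\,m'!\,n'!}}\innp{1,\,a^{m}b^{n}(a^{*})^{m'}(b^{*})^{n'}1}=\delta_{mm'}\delta_{nn'}$, using $\norm{1}_{L^{2}(\gamma)}=1$. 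In particular the number operators $N_{1}=a^{*}a=2\partial^{*}\partial$ and $N_{2}=b^{*}b=2\bar{\partial}^{*}\bar{\partial}$ act diagonally, $N_{1}J_{m,n}=mJ_{m,n}$ and $N_{2}J_{m,n}=nJ_{m,n}$.

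With this in hand the eigenvalue identity \eqref{atheta} is immediate: from the compact form, $\tilde{\mathcal{L}}_{\theta}=-e^{\mi\theta}N_{1}-e^{-\mi\theta}N_{2}$, hence $\tilde{\mathcal{L}}_{\theta}J_{m,n}=-(me^{\mi\theta}+ne^{-\mi\theta})J_{m,n}$, and $me^{\mi\theta}+ne^{-\mi\theta}=(m+n)\cos\theta+\mi(m-n)\sin\theta$. To upgrade orthonormality to a basis I would show that $\mathrm{span}\{J_{m,n}:m+n\le N\}$ is the space of polynomials in $z,\bar{z}$ of total degree at most $N$: since $a^{*}$ raises the $z$-degree by one with a nonzero leading coefficient, one gets $J_{m,n}=c_{m,n}\,z^{m}\bar{z}^{n}+(\text{lower order})$ with $c_{m,n}\neq0$, and this triangular system is invertible.

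The algebra above is routine; the two points needing care are analytic. First, the adjoint relations $\innp{\partial f,g}=\innp{f,\partial^{*}g}$ (and its conjugate) must be justified by integration by parts against $e^{-|z|^{2}/2}$, which produces no boundary term and is valid on polynomials, though a remark on domains is warranted since $\tilde{\mathcal{L}}_{\theta}$ is unbounded. Second, and this is the only genuinely substantive step, is the density of polynomials in $L^{2}(\gamma)$; I would derive it from the finiteness of all moments of $\gamma$ together with analyticity of the Fourier--Laplace transform of $f\,\dif\gamma$ for any $f$ orthogonal to every polynomial, which forces $f=0$. Everything else is a mechanical consequence of the canonical commutation relations.
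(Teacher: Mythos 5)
Your proof is correct, but note that this paper contains no proof of Proposition~\ref{lm22} to compare against: the statement is imported verbatim from \cite[Theorem~2.7, Corollary~2.8]{cl}. Judged on its own merits, your argument is sound. The commutators $[\partial,\partial^*]=[\bar\partial,\bar\partial^*]=\tfrac12$ with all mixed brackets vanishing, the identities $z=2(\partial^*+\bar\partial)$ and $\bar z=2(\bar\partial^*+\partial)$, and the factorization $\tilde{\mathcal{L}}_{\theta}=-2e^{\mi\theta}\partial^*\partial-2e^{-\mi\theta}\bar\partial^*\bar\partial=-e^{\mi\theta}N_1-e^{-\mi\theta}N_2$ all check out by direct computation, and the Fock-space algebra then yields orthonormality and (\ref{atheta}) at once; completeness correctly reduces to density of polynomials in $L^2(\gamma)$ (your analyticity-of-the-Fourier-transform argument is the standard one and works because $\gamma$ has exponential moments), combined with the triangularity $J_{m,n}=c_{m,n}z^m\bar{z}^n+(\text{lower order})$, $c_{m,n}\neq 0$, which is confirmed by the explicit formula for $J_{m,n}$ quoted in Proposition~\ref{pp26}. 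This is a genuinely different route from the one in the cited source: there, as reflected in Proposition~\ref{2dim2}, the complex Hermite polynomials are tied to products of real Hermite polynomials by an explicit unitary change of basis, so the basis property is inherited from the classical one-dimensional fact, and the eigenvalue relation is verified on the explicit polynomials. Your approach buys self-containedness and a more conceptual proof of (\ref{atheta}) --- the eigenvalue $-(m e^{\mi\theta}+n e^{-\mi\theta})$ is read directly off the number operators --- at the cost of having to supply the adjointness of $\partial$ and $\partial^*$ on polynomials (routine Gaussian integration by parts, as you acknowledge) and the polynomial-density step; it also does not produce the real--complex Hermite dictionary of Proposition~\ref{2dim2}, which this paper genuinely needs later (for instance in the proof of Theorem~\ref{pop27}).
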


The real Hermite polynomials are defined by the formula\footnote{Note that $H_n(x)=\frac{(-1)^n}{n!}  e^{x^2/2}\frac{\dif^n}{\dif x^n}e^{-x^2/2}$ in \cite{Nua,shg} and $H_n(x)= (-1)^n e^{x^2/2}\frac{\dif^n}{\dif x^n}e^{-x^2/2}$ in \cite{cl,guo}, here we use the definition in \cite{str}.} $$H_n(x)=\frac{(-1)^n}{\sqrt{n!}} e^{x^2/2}\frac{\dif^n}{\dif x^n}e^{-x^2/2},\,n=1,2,\dots.$$
The following property gives the fundamental relation between the real and the complex Hermite polynomials \cite[Corollary 2.8]{cl}.
\begin{prop}\label{2dim2}
Let $z=x + \mi y$ with $x,y\in \Rnum$. Then the real and the complex Hermite polynomials satisfy that
\begin{equation}\label{h2j}
    \begin{array}{ll}
      J_{m,l-m}(z) = \sum\limits_{k=0}^{l}{\mi^{l-k}}\sqrt{\frac{k!(l-k)!}{ 2^l m!(l-m)!}}\sum_{r+s=k}{m \choose r}{l-m \choose s}(-1)^{l-m-s} H_k(x)H_{l-k}(y),\\
        H_k(x)H_{l-k}(y) =  \mi^{l-k} \sum\limits_{m=0}^l \sqrt{\frac{m!(l-m)!}{2^lk!(l-k)!}} \sum_{r+s=m}{k \choose r}{l-k \choose s}(-1)^{s} J_{m,l-m}(z).
    \end{array}
 \end{equation}
Thus, both the class $\set{ J_{k,l}(z):\,k+l=n}$ and the class $\set{ H_k(x)H_{l}(y):\,k+l=n }$ generate the same linear subspace of $L^2(\gamma)$.
\end{prop}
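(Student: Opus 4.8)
The plan is to express both families through the one\text{-}dimensional real creation operators and then compare the two resulting expansions on each fixed total degree $l=m+n$.

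First I would rewrite the complex creation operators in terms of the one-dimensional creation operators $A_x^*:=x-\frac{\partial}{\partial x}$ and $A_y^*:=y-\frac{\partial}{\partial y}$. Using $z=x+\mi y$, $\bar z=x-\mi y$ together with the Wirtinger derivatives one finds
\[
\partial^*=-\bar\partial+\frac{z}{2}=\tfrac12\big(A_x^*+\mi A_y^*\big),\qquad
\bar\partial^*=-\partial+\frac{\bar z}{2}=\tfrac12\big(A_x^*-\mi A_y^*\big).
\]
Because $A_x^*$ and $A_y^*$ act on independent variables they commute, so $\partial^*$ and $\bar\partial^*$ commute and every product may be expanded by the binomial theorem. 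The second ingredient is the one-dimensional Rodrigues identity $A_x^*=-e^{x^2/2}\frac{d}{dx}e^{-x^2/2}$, which yields $(A_x^*)^k 1=\sqrt{k!}\,H_k(x)$, and similarly in $y$.

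Next I would substitute. Writing $n=l-m$ and expanding
\[
(\partial^*)^m(\bar\partial^*)^n=\frac{1}{2^{l}}\sum_{r=0}^m\sum_{s=0}^n\binom{m}{r}\binom{n}{s}\mi^{\,m-r}(-\mi)^{\,n-s}(A_x^*)^{r+s}(A_y^*)^{(m-r)+(n-s)},
\]
applying it to $1$ and collecting the terms with $r+s=k$ (so that the $y$-degree is $l-k$) produces $H_k(x)H_{l-k}(y)$ with a coefficient proportional to $\sum_{r+s=k}\binom{m}{r}\binom{n}{s}\mi^{m-r}(-\mi)^{n-s}$. The only genuine simplification is the phase: from $(-\mi)^{a}=(-1)^{a}\mi^{a}$ and $r+s=k$ one gets $\mi^{\,m-r}(-\mi)^{(l-m)-s}=(-1)^{\,l-m-s}\mi^{\,l-k}$, and combined with the prefactor $\sqrt{2^{l}/(m!\,n!)}\cdot 2^{-l}\cdot\sqrt{k!(l-k)!}=\sqrt{k!(l-k)!/(2^l m!(l-m)!)}$ this reproduces exactly the first identity in \eqref{h2j}. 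The index ranges take care of themselves, since $\binom{m}{r}$ and $\binom{n}{s}$ vanish outside $0\le r\le m$, $0\le s\le n$, so the inner sum may be written as an unrestricted sum over $r+s=k$.

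For the second identity I would invert the linear relations to $A_x^*=\partial^*+\bar\partial^*$ and $A_y^*=-\mi(\partial^*-\bar\partial^*)$, write $H_k(x)H_{l-k}(y)=\frac{1}{\sqrt{k!(l-k)!}}(A_x^*)^k(A_y^*)^{l-k}1$, and run the mirror-image binomial expansion, now reading off the coefficient of $J_{m,l-m}$. A cleaner alternative is to observe that both $\set{J_{m,l-m}:0\le m\le l}$ and $\set{H_k(x)H_{l-k}(y):0\le k\le l}$ are orthonormal in $L^2(\gamma)$ and, by the first identity, span the same $(l+1)$-dimensional space of total degree $l$; hence the transition matrix on that space is unitary and the second formula is simply the conjugate-transpose (inverse) of the first. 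Either way the final assertion that the two classes generate the same subspace is immediate, because on each fixed total degree the change of basis is invertible. I expect the only real obstacle to be disciplined index- and phase-tracking in the double binomial sum; conceptually everything is forced once $\partial^*,\bar\partial^*$ are factored through $A_x^*,A_y^*$.
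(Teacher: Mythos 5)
Your proof is correct. Bear in mind that the paper does not actually prove Proposition~\ref{2dim2}: it is imported wholesale from \cite[Corollary~2.8]{cl}, so your argument can only be judged on its own merits rather than against an internal proof, and as a self-contained derivation it works. Your route --- factoring the complex creation operators as $\partial^*=\tfrac12(A_x^*+\mi A_y^*)$, $\bar\partial^*=\tfrac12(A_x^*-\mi A_y^*)$ with $A_x^*=x-\frac{\partial}{\partial x}$, $A_y^*=y-\frac{\partial}{\partial y}$, using their commutativity, the Rodrigues identity $(A_x^*)^k 1=\sqrt{k!}\,H_k(x)$ (which matches this paper's normalization of $H_k$), and tracking phases via $\mi^{m-r}(-\mi)^{(l-m)-s}=(-1)^{l-m-s}\mi^{l-k}$ --- reproduces the first identity of (\ref{h2j}) exactly, prefactor included; and the mirror-image expansion from $A_x^*=\partial^*+\bar\partial^*$, $A_y^*=-\mi(\partial^*-\bar\partial^*)$ yields the second identity, since $(-\mi)^{l-k}(-1)^{l-k-s}=\mi^{l-k}(-1)^{s}$. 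The one point to flag is your ``cleaner alternative'' for the second identity: unitarity of the transition matrix only tells you that $H_k(x)H_{l-k}(y)=\sum_m \overline{U_{mk}}\,J_{m,l-m}(z)$, where $U_{mk}$ is the coefficient matrix of the first identity; identifying $\overline{U_{mk}}$ with the displayed coefficient (a sum over $r+s=m$ of binomials in $k$ and $l-k$, rather than a sum over $r+s=k$ of binomials in $m$ and $l-m$) is itself a nontrivial combinatorial identity, so that shortcut proves that \emph{some} expansion with the conjugated coefficients holds, but not, without extra work, the stated formula. Since your primary route does the computation directly, the proof is complete as given, and the final assertion about equal spans follows, as you say, because on each fixed total degree the two orthonormal families are related by an invertible (indeed unitary) matrix.
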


\section{The normality of the complex Ornstein-Uhlenbeck semigroup}\label{normty}
In \cite{cl}, the following complex Ornstein-Uhlenbeck process $\set{Z_t}$ is considered:
\begin{equation}\label{cp}
\left\{
      \begin{array}{ll}
      \dif Z_t=-e^{\mi \theta} Z_t\dif t+ \sqrt{2\cos\theta}\dif \zeta_t,\quad t\ge 0,  \\
      Z_0=x \in \Cnum ,
      \end{array}
\right.
\end{equation}
where $\theta\in (-\frac{\pi}{2},\,\frac{\pi}{2})$, and $\zeta_t=B_1(t)+\mi B_2(t)$ is a complex Brownian motion. Solving for $Z$ gives
\begin{equation}\label{solv}
   Z_t=e^{- (\cos \theta+\mi \sin \theta)t}\big( x+ \sqrt{2\cos\theta} \int_{0}^t  e ^{ (\cos \theta+\mi \sin \theta)s}\, \dif \zeta_s \big).
\end{equation}
Thus, the associated Ornstein-Uhlenbeck semigroup of Eq.(\ref{cp}) has the following explicit representation, due to Kolmogorov, for each $\varphi\in C_b(\Rnum^2)$ (the space of all continuous and bounded complex-valued functions on $\Rnum^2$),
\begin{align}
     P_t \varphi (x)&=E_x[ \varphi(Z_t) ]\label{compp}\\
     &=\frac{1}{2\pi (1-e^{-2 t\cos\theta})}\int_{\Rnum^2}\,e^{-\frac{\abs{y}^2}{2(1-e^{-2 t\cos\theta})}}\varphi(e^{- (\cos \theta+\mi \sin \theta)t}x-y)\,\dif y_1\dif y_2,\label{compp2}
\end{align} where $y=y_1+\mi y_2$ and $x,y\in \Cnum$ and we write a function $\varphi(y_1,y_2)$ of the two real variables $y_1$ and $y_2$ as $\varphi(y)$ of the complex argument $y_1+\mi y_2$ (i.e.,
we use the complex representation of $\Rnum^2$ in (\ref{compp}-\ref{compp2})). The change of variable formula yields the following  Mehler formula \cite[p584]{cl}.
\begin{prop}{\bf(Mehler formula)}\label{lm209}
For each $\varphi\in C_b(\Rnum^2)$,
\begin{equation}\label{ousemi}
  P_t\varphi(x)= \int_{\Cnum}\,\varphi(e^{- (\cos \theta+\mi \sin \theta)t} x +\sqrt{1-e^{-2t\cos \theta}}y)\,\dif \gamma(y),
\end{equation}
where
\begin{equation}\label{meas}
\dif\gamma(y) = \frac{1}{2\pi }\exp\set{-\frac{(y_1^2+y_2^2)}{2}}\dif y_1\dif y_2
\end{equation}
\end{prop}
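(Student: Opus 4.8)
\section*{Proof proposal}

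The plan is to obtain (\ref{ousemi}) directly from the Kolmogorov representation (\ref{compp2}) by a single affine change of variables, since both displays are integrals of the same test function $\varphi$ against an isotropic Gaussian density. Write $\sigma^2:=1-e^{-2t\cos\theta}$ for the per-component variance appearing in (\ref{compp2}); note $\sigma^2>0$ for $t>0$ because $\cos\theta>0$ on $(-\frac{\pi}{2},\frac{\pi}{2})$.

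First I would substitute $y=-\sigma w$ in (\ref{compp2}), with $y=y_1+\mi y_2$ and $w=w_1+\mi w_2$; this is precisely the substitution that turns the shift $-y$ inside the argument of $\varphi$ into $+\sqrt{1-e^{-2t\cos\theta}}\,w$, matching the argument of $\varphi$ in (\ref{ousemi}). The two-dimensional Jacobian of $(y_1,y_2)\mapsto(w_1,w_2)$ contributes a factor $\sigma^2$, i.e.\ $\dif y_1\dif y_2=\sigma^2\,\dif w_1\dif w_2$, while the exponent simplifies as $\frac{\abs{y}^2}{2\sigma^2}=\frac{\abs{w}^2}{2}=\frac{w_1^2+w_2^2}{2}$.

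The key bookkeeping step is then that the Jacobian factor $\sigma^2$ exactly cancels the normalizing constant $\frac{1}{2\pi\sigma^2}$ of (\ref{compp2}), leaving the prefactor $\frac{1}{2\pi}$ and the standard Gaussian weight $e^{-(w_1^2+w_2^2)/2}$; recognizing this weight as $\dif\gamma(w)$ from (\ref{meas}) yields (\ref{ousemi}) verbatim. All manipulations are legitimate because $\varphi\in C_b(\Rnum^2)$ and the Gaussian density is integrable, so the integral converges absolutely and the change of variables is valid.

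I do not expect a genuine obstacle at this level, as the computation is routine. The only point meriting care---were one instead to derive (\ref{compp2}) from the explicit solution (\ref{solv}) rather than quoting Kolmogorov's formula---is verifying that the Wiener integral $\sqrt{2\cos\theta}\int_0^t e^{(\cos\theta+\mi\sin\theta)s}\,\dif\zeta_s$ is an \emph{isotropic} complex Gaussian. Concretely, one checks that its real and imaginary parts are independent with common variance $\frac{e^{2t\cos\theta}-1}{2\cos\theta}$, computed from $\int_0^t\abs{e^{(\cos\theta+\mi\sin\theta)s}}^2\,\dif s=\frac{e^{2t\cos\theta}-1}{2\cos\theta}$ together with the vanishing of the cross-covariance; the prefactor $\sqrt{2\cos\theta}$ and the dilation-rotation $e^{-(\cos\theta+\mi\sin\theta)t}$ then rescale the common variance to $1-e^{-2t\cos\theta}=\sigma^2$ while preserving isotropy, which reproduces the density in (\ref{compp2}).
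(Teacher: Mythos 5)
Your proof is correct and follows exactly the route the paper takes: the paper obtains (\ref{ousemi}) from the Kolmogorov representation (\ref{compp2}) by the change of variables formula, which is precisely your substitution $y=-\sigma w$ with the Jacobian factor $\sigma^2$ cancelling the normalization $\frac{1}{2\pi\sigma^2}$. Your closing remark verifying the isotropy of the Wiener integral in (\ref{solv}) is also sound, though the paper simply quotes Kolmogorov's formula rather than rederiving it.
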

Similar to the real Ornstein-Uhlenbeck semigroup \cite[Proposition~2.3]{shg}, using the rotation invariant of the measure $\gamma$ and Lebesgue's dominated convergence theorem, it follows from Proposition~\ref{lm209} that $\gamma$ is the unique invariant measure of $P_t$. In detail, for each $\varphi\in C_b(\Rnum^2)$,
\begin{equation}\label{inv}
  \int_{\Cnum}  P_t\varphi(x)\,\dif \gamma(x)= \int_{\Cnum}  \varphi(x)\,\dif \gamma(x)
\end{equation}
and
\begin{equation}\label{uniq}
  \lim_{t\to \infty}  P_t\varphi(x)= \int_{\Cnum}  \varphi(y)\,\dif \gamma(y),\quad \forall x\in \Cnum.
\end{equation}

Denote the associated transition probabilities on $\Cnum$ as $P_t(x,A)=P_t\mathbf{1}_A(x)$ for each $A\in \mathcal{B}(\Rnum^2)$.
Along the same line of the real case \cite{bd,shg,str}, for each $p\ge 1$, it follows from Jensen's inequality that for each $\varphi\in C_b(\Rnum^2)$,
\begin{align*}
  \norm{P_t\varphi}^p_{L^p(\gamma)}  & =  \int_{\Cnum}\abs{ P_t\varphi(x)}^p\,\dif \gamma(x)= \int_{\Cnum}\abs{\int_{\Cnum }\varphi(y) P_t(x,\,\dif\, y) }^p\,\dif \gamma(x)\\
  &\le \int_{\Cnum}\,\dif \gamma(x)\int_{\Cnum }\abs{\varphi(y) }^p P_t(x,\,\dif\, y)\\
  &= \int_{\Cnum}  \abs{\varphi }^p (x )\,\dif \gamma(x)\quad(\text {by (\ref{inv})})\\
  &=\norm{ \varphi}^p_{L^p(\gamma)}.
\end{align*}
It follows from the B.L.T. theorem \cite[p9]{rs} that $\set{P_t}_{t\ge 0}$ can be uniquely extended to a strong continuous contraction semigroup $\set{T_t^p}_{t\ge 0}$ on $L^p(\gamma)$ for each $p\ge 1$ \footnote{Namely, $T_t^p$ is the closure (see \cite[p250]{rs}) in $L^p(\gamma)$ of the operator $P_t$.}. Let $\mathcal{A}_p$ be the (infinitesimal) generator, then $\mathcal{A}_p$ is closed and $\overline{D(\mathcal{A}_p)}=L^p(\gamma)$ (i.e., densely defined) \cite{Pazy, rs}.

\begin{lem}\label{l2110}
Suppose $\vec{Y}=(y_1,\dots,y_n),\,\vec{Z}=(z_1,\dots,z_n)\in \Cnum^n$ and
$  \vec{Y}=\tensor{M}\vec{Z}$,
where $\tensor{M}=(M_{ij})$ is an $n$-by-$n$ unitary matrix over the field $\Cnum$.
If $z_i,i=1\dots,n$ are independent, each being centered complex normal such that $E\abs{z_i}^2=\sigma^2$, then $y_i,i=1\dots,n$ are also independent, each being centered complex normal such that $E\abs{y_i}^2=\sigma^2$.
\end{lem}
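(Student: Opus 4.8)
The plan is to characterize the joint law of $\vec{Z}$ through its characteristic function and then exploit the fact that a complex unitary matrix acts on $\Rnum^{2n}\cong\Cnum^{n}$ as a real orthogonal map preserving the Hermitian norm. First I would fix the meaning of ``centered complex normal with $E\abs{z_i}^2=\sigma^2$'' to be the circularly symmetric case, i.e. $z_i=\xi_i+\mi\eta_i$ with $\xi_i,\eta_i$ independent real $N(0,\sigma^2/2)$ variables; this is precisely the law inherited from the complex Brownian increments $\zeta_t$ in (\ref{cp}). Under this identification the real $2n$-vector $(\xi_1,\eta_1,\dots,\xi_n,\eta_n)$ is the isotropic Gaussian whose distribution is invariant under the full orthogonal group $O(2n)$.

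Next I would compute the joint characteristic function of $\vec{Z}$. Writing $\langle \vec w,\vec Z\rangle=\sum_{i}\bar w_i z_i$ for the Hermitian inner product and using independence of the coordinates together with the elementary one-dimensional identity $E[\exp(\mi\,\RE(\bar w z))]=\exp(-\tfrac{\sigma^2}{4}\abs{w}^2)$ (obtained by expanding $\RE(\bar wz)=w_1\xi+w_2\eta$), one gets
\begin{equation*}
  E\big[\exp(\mi\,\RE\langle \vec w,\vec Z\rangle)\big]=\exp\Big(-\frac{\sigma^2}{4}\norm{\vec w}^2\Big),
\end{equation*}
so the characteristic function depends on $\vec w$ only through the Hermitian norm $\norm{\vec w}^2=\sum_i\abs{w_i}^2$.

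The key step is to transport this to $\vec Y=\tensor M\vec Z$. Since $\RE\langle \vec w,\tensor M\vec Z\rangle=\RE\langle \tensor M^{*}\vec w,\vec Z\rangle$, the characteristic function of $\vec Y$ at $\vec w$ equals that of $\vec Z$ at $\tensor M^{*}\vec w$; unitarity gives $\norm{\tensor M^{*}\vec w}=\norm{\vec w}$, whence $\vec Y$ has exactly the same characteristic function $\exp(-\tfrac{\sigma^2}{4}\norm{\vec w}^2)$ as $\vec Z$. Because this factors as $\prod_i\exp(-\tfrac{\sigma^2}{4}\abs{w_i}^2)$, the uniqueness and factorization criterion for characteristic functions yields that the $y_i$ are independent, each centered circularly symmetric complex normal with $E\abs{y_i}^2=\sigma^2$, as claimed.

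I expect the only genuine subtlety, and the point most worth spelling out, to be the passage between the complex and real pictures: one must be sure the distributional hypothesis on the $z_i$ is the circularly symmetric one (this is essential, since for $E[z_i^2]\neq 0$ a general unitary destroys the product structure), and that the complex-linear map $\tensor M$ does act as a real orthogonal map, equivalently that it preserves $\RE\langle\cdot,\cdot\rangle$. An entirely equivalent route avoiding characteristic functions is the change-of-variables formula: the joint density of $\vec Z$ is proportional to $\exp(-\norm{\vec z}^2/\sigma^2)$, and applying $\vec z=\tensor M^{*}\vec y$ leaves both the exponent (by unitarity) and Lebesgue measure (since $\det_{\Rnum}\tensor M=\abs{\det_{\Cnum}\tensor M}^2=1$) unchanged, producing the same product density for $\vec Y$.
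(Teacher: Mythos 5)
Your proof is correct, but it follows a genuinely different route from the paper's. The paper's argument is a two-liner by citation: it invokes It\^{o}'s theorem (Theorem~1.1 of the reference \cite{ito}) to conclude that each $y_i$ is centered complex normal, and then computes the Hermitian covariances
\begin{equation*}
E[y_i\bar{y}_j]=\sum_{k,l}M_{ik}\,E[z_k\bar{z}_l]\,\bar{M}_{jl}=\sigma^2\sum_{k}M_{ik}\bar{M}_{jk}=\sigma^2\delta_{ij},
\end{equation*}
concluding independence from uncorrelatedness. You instead compute the full joint characteristic function $E[\exp(\mi\,\RE\innp{\vec w,\vec Z})]=\exp(-\tfrac{\sigma^2}{4}\norm{\vec w}^2)$, observe its invariance under $\vec w\mapsto \tensor{M}^*\vec w$, and read off both joint Gaussianity and independence from the factorization and uniqueness of characteristic functions. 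What your approach buys: it is self-contained (no appeal to It\^{o}'s closure theorem), it establishes the joint law in one stroke, and it makes explicit two points the paper leaves implicit --- first, that ``centered complex normal'' must mean circularly symmetric (you rightly note the lemma is false otherwise, e.g.\ real Gaussians viewed as degenerate complex ones are not preserved by a general unitary); second, that passing from $E[y_i\bar y_j]=\sigma^2\delta_{ij}$ to independence requires joint circularity, i.e.\ also the vanishing of the pseudo-covariances $E[y_iy_j]$, which the paper's covariance computation alone does not address. What the paper's approach buys is brevity, by outsourcing the structural facts about the class of complex Gaussian vectors to It\^{o}. Your alternative density argument (unitarity preserves both $\exp(-\norm{\vec z}^2/\sigma^2)$ and Lebesgue measure, since $\det_{\Rnum}\tensor{M}=\abs{\det_{\Cnum}\tensor{M}}^2=1$) is also valid and is perhaps the closest in spirit to the rotation-invariance of $\gamma$ used elsewhere in Section~\ref{normty}.
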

\begin{proof} It follows from \cite[Theorem 1.1]{ito} that $y_i,i=1\dots,n$ are centered complex normal. In addition, we have that
\begin{align*}
  E[y_i\bar{y}_j] & = \sum_{k,l}M_ik E[z_k\bar{z}_l] \bar{M}_{jl} =\sigma^2  \sum_{k} M_{ik}\bar{M}_{jk}=\sigma^2 \delta_{ij},
\end{align*} where $\delta_{ij}$ is the Kronecker delta. Thus, $y_i,i=1\dots,n$ have independent identical distributions with variance $\sigma^2$.
\end{proof}
\begin{prop}
For each $\theta\in (-\frac{\pi}{2},\,\frac{\pi}{2})$,
denote the semigroup $P_t$ depending on $\theta$ in (\ref{compp}) by $P_t^{\theta}$, then for each $\phi \in C_b(\Rnum^2)$
\begin{equation}\label{nonom}
  P_t^{\theta} (P^{\theta}_t)^*\phi(x)=(P^{\theta}_t)^* P_t^{\theta}\phi(x),
\end{equation}
where $(P^{\theta}_t)^*$ is the adjoint operator of $P^{\theta}_t$ in $L^2(\gamma)$. Furthermore, when restricted on $C_b(\Rnum^2)$, $(P^{\theta}_t)^*=P^{-\theta}_t$.
\end{prop}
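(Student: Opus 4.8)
The plan is to argue directly from the Mehler formula (\ref{ousemi}), exploiting the fact that a \emph{proper} (circularly symmetric) complex Gaussian vector is determined by its Hermitian covariance matrix, which is precisely the content of Lemma~\ref{l2110}. Throughout I would write $\rho=\rho(\theta)=e^{-(\cos\theta+\mi\sin\theta)t}$ and $\sigma=\sqrt{1-e^{-2t\cos\theta}}$, so that $\abs{\rho}^2+\sigma^2=1$, and record the two symmetries $\rho(-\theta)=\bar\rho$, $\sigma(-\theta)=\sigma$. Then (\ref{ousemi}) reads $P_t^{\theta}\varphi(x)=\int_{\Cnum}\varphi(\rho x+\sigma y)\,\dif\gamma(y)$, and correspondingly $P_t^{-\theta}\psi(x)=\int_{\Cnum}\psi(\bar\rho x+\sigma y)\,\dif\gamma(y)$.

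First I would identify the adjoint. For $\varphi,\psi\in C_b(\Rnum^2)$, Fubini (legitimate since $\varphi,\psi$ are bounded and $\gamma$ finite) turns $\innp{P_t^{\theta}\varphi,\psi}$ into the single Gaussian expectation $E[\varphi(\rho x+\sigma y)\overline{\psi(x)}]$ over independent standard complex normal $x,y$, while $\innp{\varphi,P_t^{-\theta}\psi}=E[\varphi(w)\overline{\psi(\bar\rho w+\sigma v)}]$ over independent standard complex normal $w,v$. These are integrals of the same bounded continuous kernel $\varphi(\cdot)\overline{\psi(\cdot)}$ against the laws of the pairs $(\rho x+\sigma y,\,x)$ and $(w,\,\bar\rho w+\sigma v)$. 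A direct computation shows both pairs are centered, have unit variance in each coordinate, vanishing pseudo-covariance, and the same cross-covariance $E[(\text{first})\overline{(\text{second})}]=\rho$; hence by Lemma~\ref{l2110} (applied through a $2$-by-$2$ unitary putting either pair in standard form) the two laws coincide. Thus $\innp{P_t^{\theta}\varphi,\psi}=\innp{\varphi,P_t^{-\theta}\psi}$ for all $\varphi,\psi\in C_b(\Rnum^2)$, and since $C_b(\Rnum^2)$ is dense in $L^2(\gamma)$ while $P_t^{-\theta}$ is an $L^2$-contraction, this identifies $(P_t^{\theta})^*$ with the closure of $P_t^{-\theta}$; in particular $(P_t^{\theta})^*=P_t^{-\theta}$ on $C_b(\Rnum^2)$, the second assertion.

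With the adjoint identified, normality (\ref{nonom}) reduces to the commutation $P_t^{\theta}P_t^{-\theta}\phi=P_t^{-\theta}P_t^{\theta}\phi$. Expanding each composition by the Mehler formula and collecting the deterministic term gives, over independent standard complex normal $y,v$,
\begin{equation*}
P_t^{\theta}P_t^{-\theta}\phi(x)=E\big[\phi(\abs{\rho}^2x+\sigma(\bar\rho y+v))\big],\qquad
P_t^{-\theta}P_t^{\theta}\phi(x)=E\big[\phi(\abs{\rho}^2x+\sigma(y+\rho v))\big].
\end{equation*}
The deterministic parts $\abs{\rho}^2x$ agree, and the two noise terms $\bar\rho y+v$ and $y+\rho v$ are each centered proper complex Gaussians of variance $\abs{\rho}^2+1$ with vanishing pseudo-covariance; applying Lemma~\ref{l2110} once more (a suitable $2$-by-$2$ unitary reduces each to $\sqrt{\abs{\rho}^2+1}$ times a single standard complex normal) shows they are identically distributed. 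Hence the two expectations coincide for every bounded continuous $\phi$, which is (\ref{nonom}).

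The steps requiring care rather than mere computation are the two distributional identifications: one must check not only the Hermitian covariances but also that every Gaussian vector involved is proper (pseudo-covariance zero), so that Lemma~\ref{l2110} genuinely forces equality of laws — the independence and circular symmetry of the driving $\gamma\otimes\gamma$ variables are exactly what make this automatic. The remaining points, the Fubini interchanges and the density/contraction argument promoting the $C_b$ identity to the $L^2$-adjoint, are routine. As a consistency check, the same conclusions follow spectrally: since $P_t^{\theta}J_{m,n}=e^{t\lambda_{m,n}(\theta)}J_{m,n}$ with $\lambda_{m,n}(\theta)=-[(m+n)\cos\theta+\mi(m-n)\sin\theta]$ by (\ref{atheta}), one has $\overline{\lambda_{m,n}(\theta)}=\lambda_{m,n}(-\theta)$, so $(P_t^{\theta})^*$ and $P_t^{-\theta}$ agree on the orthonormal basis $\set{J_{m,n}}$, and both $P_t^{\theta}(P_t^{\theta})^*$ and $(P_t^{\theta})^*P_t^{\theta}$ act on each $J_{m,n}$ as multiplication by $e^{2t\RE\lambda_{m,n}(\theta)}$, confirming simultaneously the adjoint formula and normality.
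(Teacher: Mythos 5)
Your proof is correct and takes essentially the same route as the paper: both identify $(P^{\theta}_t)^*=P^{-\theta}_t$ on $C_b(\Rnum^2)$ via the Mehler formula together with Lemma~\ref{l2110}, and then verify (\ref{nonom}) by composing the Mehler representations and observing that the two compositions have the same deterministic part $e^{-2t\cos\theta}x$ and identically distributed Gaussian noise. The only cosmetic difference is that you match Hermitian covariances (checking properness) and infer the existence of a suitable unitary, whereas the paper exhibits the explicit $2$-by-$2$ unitary matrix and then uses rotation invariance of $\gamma$.
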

\begin{proof}
 Set $\alpha=e^{\mi\theta}$.
 For each $\phi,\psi\in C_b(\Rnum^2)$ and $t\ge0$, we have that
 \begin{align}
    \innp{P^{\theta}_t\phi,\,\psi} & =  \int_{\Cnum}\,\bar{\psi}(z_1)\,\dif \gamma(z_1) \int_{\Cnum}\,\phi(e^{- \alpha t} z_1 +\sqrt{1-e^{-2t\RE{\alpha}}}z_2)\,\dif \gamma(z_2)\nonumber\\
     &= \int_{\Cnum}\, {\phi}(y_1)\,\dif \gamma(y_1) \int_{\Cnum}\,\overline{\psi(e^{- \bar{\alpha}t} y_1 -\sqrt{1-e^{-2t\RE{\alpha}}}y_2)}\,\dif \gamma(y_2)\label{l211}\\
     &=\int_{\Cnum}\, {\phi}(y_1)\,\dif \gamma(y_1) \int_{\Cnum}\,\overline{\psi(e^{- \bar{\alpha}t} y_1 +\sqrt{1-e^{-2t\RE{\alpha}}}y_2)}\,\dif \gamma(y_2)\label{l21100}\\
     &=\innp{\phi,\,P^{-\theta}_t \psi}\nonumber,
 \end{align} where (\ref{l211}) is deduced from Lemma~\ref{l2110} by taking $n=2$ and
 \begin{equation*}
 \tensor{M}=\begin{bmatrix} e^{-\alpha t} & \sqrt{1-e^{-2t\RE{\alpha}}}\\ -\sqrt{1-e^{-2t\RE{\alpha}}} & e^{-\bar{\alpha} t}  \end{bmatrix},
\end{equation*}
and (\ref{l21100}) is deduced from the rotation invariant of the measure $\gamma$.
Therefore, the adjoint operator of $P^{\theta}_t$ in $L^2(\gamma)$ satisfies that $(P^{\theta}_t)^*=P^{-\theta}_t$ when restricted on $C_b(\Rnum^2)$ for each $\theta\in (-\frac{\pi}{2},\,\frac{\pi}{2})$. Thus, for each $\phi \in C_b(\Rnum^2)$,
\begin{align}
  P_t^{\theta} (P^{\theta}_t)^*\phi(x) & = \int_{\Cnum^2} \phi\big(e^{- \bar{\alpha} t}(e^{-  {\alpha} t} x +\sqrt{1-e^{-2t\RE{\alpha}}}z_1)+ \sqrt{1-e^{-2t\RE{\alpha}}}z_2\big)\,  \dif \gamma(z_1)\dif \gamma(z_2)\nonumber\\
   & = \int_{\Cnum } \phi\big(e^{- 2 t\RE{\alpha}}  x +\sqrt{1-e^{-4t\RE{\alpha}}}z \big)\,  \dif \gamma(z)\label{convpd}\\
   &=\int_{\Cnum^2} \phi\big(e^{-{\alpha} t}(e^{-  \bar{\alpha} t} x +\sqrt{1-e^{-2t\RE{\alpha}}}z_1)+ \sqrt{1-e^{-2t\RE{\alpha}}}z_2\big)\,  \dif \gamma(z_1)\dif \gamma(z_2)\nonumber\\
   &= (P^{\theta}_t)^* P_t^{\theta}\phi(x),\nonumber
\end{align}
where (\ref{convpd}) is deduced from the well-known fact that if $Z_1,Z_2$ are two independent standard complex normal random variables, then $e^{- \bar{\alpha} t} \sqrt{1-e^{-2t\RE{\alpha}}}Z_1+ \sqrt{1-e^{-2t\RE{\alpha}}}Z_2 $ and $\sqrt{1-e^{-4t\RE{\alpha}}}Z_1$ have the same law \cite[Theorem 1.1]{ito}.
\end{proof}

\begin{thm}\label{lm212}
  $\set{T_t^2}_{t\ge 0}$ is a semigroup of normal operators (see \cite[p382]{ru}) in $L^2(\gamma)$ and thus the generator $\mathcal{A}_2$ is a normal operator in $L^2(\gamma)$.
\end{thm}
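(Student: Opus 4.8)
The plan is to obtain the normality of every $T_t^2$ directly from the preceding Proposition by a density argument, and then to transfer normality from the bounded semigroup operators to the unbounded generator $\mathcal{A}_2$ through its resolvent.

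First I would extend the two identities of the Proposition from $C_b(\Rnum^2)$ to $L^2(\gamma)$. Since $C_b(\Rnum^2)$ is dense in $L^2(\gamma)$ and each $P_t^{\theta}$ extends to the contraction $T_t^2$, the relation $\innp{P_t^{\theta}\phi,\psi}=\innp{\phi,P_t^{-\theta}\psi}$ shows that the Hilbert-space adjoint $(T_t^2)^*$ coincides with the $L^2(\gamma)$-closure of $P_t^{-\theta}$; in particular $(T_t^2)^*$ is bounded and agrees with $P_t^{-\theta}$ on the dense set $C_b(\Rnum^2)$. The operator identity (\ref{nonom}), read as $P_t^{\theta}(P_t^{\theta})^*=(P_t^{\theta})^*P_t^{\theta}$ on $C_b(\Rnum^2)$, then holds between the two bounded operators $T_t^2(T_t^2)^*$ and $(T_t^2)^*T_t^2$ on a dense subspace, hence everywhere on $L^2(\gamma)$. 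Thus each $T_t^2$ is a bounded normal operator.

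Next I would upgrade this to commutation at different times, which is what the generator argument really needs. Because $\set{T_t^2}$ is a semigroup, $T_s^2T_t^2=T_{s+t}^2=T_t^2T_s^2$ for all $s,t\ge0$; since $T_t^2$ is normal, Fuglede's theorem yields $T_s^2(T_t^2)^*=(T_t^2)^*T_s^2$. Hence $\set{T_t^2,(T_t^2)^*:t\ge0}$ is a commuting family of bounded normal operators, and no further Mehler-type computation is needed. For the generator I would then use the resolvent: for $\RE\lambda>0$ the contractivity of $\set{T_t^2}$ puts $\lambda$ in the resolvent set of $\mathcal{A}_2$ and gives the Laplace representation $R(\lambda,\mathcal{A}_2)=\int_0^{\infty}e^{-\lambda t}T_t^2\,\dif t$, whose adjoint is $R(\lambda,\mathcal{A}_2)^*=\int_0^{\infty}e^{-\bar{\lambda} t}(T_t^2)^*\,\dif t$. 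Writing the products $R(\lambda,\mathcal{A}_2)R(\lambda,\mathcal{A}_2)^*$ and $R(\lambda,\mathcal{A}_2)^*R(\lambda,\mathcal{A}_2)$ as double strong (Bochner) integrals and applying Fubini, the integrands are $e^{-\lambda s-\bar{\lambda} t}T_s^2(T_t^2)^*$ and $e^{-\lambda s-\bar{\lambda} t}(T_t^2)^*T_s^2$ respectively; the commutation from the previous step makes them identical, so $R(\lambda,\mathcal{A}_2)$ is a bounded normal operator.

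Finally I would invoke the resolvent criterion for normality of an unbounded operator: a closed, densely defined operator with nonempty resolvent set is normal precisely when its resolvent is normal. Indeed $R:=R(\lambda,\mathcal{A}_2)$ is injective with dense range $D(\mathcal{A}_2)$, and writing $\mathcal{A}_2=\lambda I-R^{-1}=f(R)$ with $f(z)=\lambda-1/z$ exhibits $\mathcal{A}_2$ as a Borel function of the normal operator $R$ through the spectral theorem, which is again normal. I expect the main obstacle to be the functional-analytic bookkeeping in this last stage, namely justifying the Laplace and Fubini manipulations for the operator-valued integrals and applying the normality criterion with the correct identification of domains, rather than any new probabilistic input, since all the measure-theoretic content is already packaged in the preceding Proposition and in Fuglede's theorem.
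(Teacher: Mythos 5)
Your proposal is correct, and its first half coincides with the paper's argument: both identify $(T_t^2)^*$ with the extension of $P_t^{-\theta}$ via the dense subspace $C_b(\Rnum^2)$ (the paper routes this through \cite[Theorem~VIII.1(c)]{rs}) and then extend the commutation identity (\ref{nonom}) from $C_b(\Rnum^2)$ to all of $L^2(\gamma)$ by boundedness, concluding that each $T_t^2$ is a bounded normal operator. The difference lies in the passage from the semigroup to the generator: the paper disposes of this step in one line by citing \cite[Theorem~13.38]{ru}, which states precisely that the generator of a strongly continuous semigroup of normal operators is normal, whereas you prove that implication yourself --- Fuglede's theorem upgrades the semigroup commutation $T_s^2T_t^2=T_t^2T_s^2$ to $T_s^2(T_t^2)^*=(T_t^2)^*T_s^2$, the Laplace representation $R(\lambda,\mathcal{A}_2)=\int_0^\infty e^{-\lambda t}T_t^2\,\dif t$ for $\RE\lambda>0$ together with Fubini makes the resolvent normal, and the unbounded Borel functional calculus applied to $f(z)=\lambda-1/z$ transfers normality to $\mathcal{A}_2=\lambda I-R(\lambda,\mathcal{A}_2)^{-1}$. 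All of these steps are sound; the only bookkeeping points you flag are indeed standard (an injective bounded normal operator has no spectral mass at $0$, so $R^{-1}$ coincides with the functional-calculus inverse, and $f(N)$ is normal for any Borel $f$ by \cite[Theorem~13.24]{ru}). In effect you have reproved the cited Rudin theorem in this setting. What the paper's route buys is brevity and reliance on a ready-made reference; what your route buys is a self-contained argument that isolates exactly which abstract ingredients are needed (Fuglede, the resolvent Laplace transform, the spectral theorem for bounded normal operators) and that would apply verbatim to any strongly continuous contraction semigroup of normal operators, independently of the Mehler-formula structure specific to this paper.
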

\begin{proof}
   Since $ T_t^2 $ is the closure of the contraction operator $P_t$ in $L^2(\gamma)$, it follows from (c) of Theorem~VIII.1 in \cite[p253]{rs} that
   the adjoint operator of $T_t^2$ equals to that of $P_t$. It follows from the density argument that (\ref{nonom}) can be extended to each $\phi\in L^2(\gamma)$, i.e., $$T_t^2(T_t^2)^*\phi=(T_t^2)^* T_t^2\phi,\qquad \forall \phi\in L^2(\gamma).$$
   Thus $\set{T_t^2}_{t\ge 0}$ is a semigroup of normal operators. It follows from \cite[Theorem~13.38]{ru} that the generator $\mathcal{A}_2$ is a normal operator in $L^2(\gamma)$.
\end{proof}
\section{The normal diffusion operators in $\Cnum$}\label{sec4}
The first aim of this section is to show the explicit expression of the generator $\mathcal{A}_2$.
\begin{dfn}\label{cld}
For any $\theta\in(-\frac{\pi}{2},\,\frac{\pi}{2})$, define
   \begin{equation}\label{dom}
     \mathcal{D}(\mathcal{L}_{\theta})=\set{f\in L^2(\gamma),\,\sum_{m=0}^{\infty}\sum_{n=0}^{\infty} (m^2+n^2+2mn\cos2\theta)\abs{\innp{f,\,J_{m,n}}}^2<\infty  }
   \end{equation}
   and
   \begin{equation}\label{opl}
     \mathcal{L}_{\theta}f=-\sum_{m=0}^{\infty}\sum_{n=0}^{\infty}[(m+n)\cos \theta +\mi (m-n)\sin\theta]\innp{f,\,J_{m,n}} J_{m,n}(z).
   \end{equation}
\end{dfn}
\begin{thm}\label{pop27}
   If $\phi\in C^2_{\uparrow}(\Rnum^2)$, then  $\phi\in \mathcal{D}( \mathcal{L}_{\theta})$ and
   \begin{equation}\label{deri}
     \mathcal{L}_{\theta} \phi=[ 4\cos\theta \frac{\partial^2}{\partial z\partial \bar{z}}-e^{\mi\theta} z \frac{\partial}{\partial z}-e^{-\mi\theta}\bar{z} \frac{\partial}{\partial \bar{z}}]\phi.
   \end{equation}
\end{thm}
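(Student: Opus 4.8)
The plan is to identify the Fourier coefficients of $\tilde{\mathcal{L}}_{\theta}\phi$ in the orthonormal basis $\set{J_{m,n}}$ with those prescribed by Definition~\ref{cld}. Writing $\lambda_{m,n}=-[(m+n)\cos\theta+\mi(m-n)\sin\theta]$ for the eigenvalue in \eqref{atheta}, one checks the elementary identity $\abs{\lambda_{m,n}}^2=(m+n)^2\cos^2\theta+(m-n)^2\sin^2\theta=m^2+n^2+2mn\cos2\theta$, so the weight appearing in the domain \eqref{dom} is exactly $\abs{\lambda_{m,n}}^2$. It therefore suffices to establish, for all $m,n\in\Nnum$, the single relation
\[
\innp{\tilde{\mathcal{L}}_{\theta}\phi,\,J_{m,n}}=\lambda_{m,n}\innp{\phi,\,J_{m,n}}.
\]
Granting this, Parseval's identity gives $\sum_{m,n}\abs{\lambda_{m,n}}^2\abs{\innp{\phi,J_{m,n}}}^2=\norm{\tilde{\mathcal{L}}_{\theta}\phi}_{L^2(\gamma)}^2$, which is finite once we know $\tilde{\mathcal{L}}_{\theta}\phi\in L^2(\gamma)$; this yields $\phi\in\mathcal{D}(\mathcal{L}_{\theta})$, and comparing the expansion \eqref{opl} with that of $\tilde{\mathcal{L}}_{\theta}\phi$ term by term gives $\mathcal{L}_{\theta}\phi=\tilde{\mathcal{L}}_{\theta}\phi$.

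To prove the displayed relation I would first rewrite the differential operator through the creation/annihilation operators of Definition~\ref{ijldf}. Using $\partial^{*}\phi=-\bar{\partial}\phi+\tfrac{z}{2}\phi$ and $\bar{\partial}^{*}\phi=-\partial\phi+\tfrac{\bar{z}}{2}\phi$, a direct computation gives the factorization
\[
\tilde{\mathcal{L}}_{\theta}=-2e^{\mi\theta}\partial^{*}\partial-2e^{-\mi\theta}\bar{\partial}^{*}\bar{\partial}.
\]
Since $\partial^{*},\bar{\partial}^{*}$ are the $L^2(\gamma)$-adjoints of $\partial,\bar{\partial}$, two successive integrations by parts move both operators off $\phi$ and onto $J_{m,n}$, the complex scalars being conjugated as they enter the second slot of the inner product; this turns the left-hand side into $\innp{\phi,\,-2e^{-\mi\theta}\partial^{*}\partial J_{m,n}-2e^{\mi\theta}\bar{\partial}^{*}\bar{\partial}J_{m,n}}=\innp{\phi,\,\tilde{\mathcal{L}}_{-\theta}J_{m,n}}$. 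By Proposition~\ref{lm22} applied with $-\theta$ one has $\tilde{\mathcal{L}}_{-\theta}J_{m,n}=\overline{\lambda_{m,n}}\,J_{m,n}$, and pulling $\overline{\lambda_{m,n}}$ out of the conjugate-linear slot restores $\lambda_{m,n}\innp{\phi,J_{m,n}}$, as required.

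The hard part will be the rigorous justification of the two integrations by parts and of the claim $\tilde{\mathcal{L}}_{\theta}\phi\in L^2(\gamma)$ for $\phi\in C^2_{\uparrow}(\Rnum^2)$. Each integration by parts against the Gaussian weight generates a boundary term at infinity; because the functions in $C^2_{\uparrow}$ together with their first and second derivatives grow at most polynomially while $J_{m,n}$ and its derivatives are polynomials, every such term is a polynomial multiplied by $e^{-(x^2+y^2)/2}$ and hence vanishes at infinity, while the surviving integrands are integrable against $\gamma$; this is precisely the content of the adjoint relations $\innp{\partial u,v}=\innp{u,\partial^{*}v}$ and $\innp{\partial^{*}u,v}=\innp{u,\partial v}$, which I would establish first for compactly supported functions and then extend to $C^2_{\uparrow}$ via the approximation-of-identity results collected in the Appendix. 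The same polynomial-growth bounds show directly that $\tilde{\mathcal{L}}_{\theta}\phi$, being built from second derivatives and from first derivatives multiplied by the linear factors $z$ and $\bar{z}$, belongs to $L^2(\gamma)$, which closes the argument.
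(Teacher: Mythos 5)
Your proposal is correct, but it takes a genuinely different route from the paper's proof. You first verify the factorization $\tilde{\mathcal{L}}_{\theta}=-2e^{\mi\theta}\partial^{*}\partial-2e^{-\mi\theta}\bar{\partial}^{*}\bar{\partial}$ (which indeed holds on $C^2$ functions, since $-2e^{\mi\theta}(-\bar{\partial}\partial+\tfrac{z}{2}\partial)-2e^{-\mi\theta}(-\partial\bar{\partial}+\tfrac{\bar{z}}{2}\bar{\partial})=4\cos\theta\,\partial\bar{\partial}-e^{\mi\theta}z\partial-e^{-\mi\theta}\bar{z}\bar{\partial}$), then compute the Fourier coefficients by duality, $\innp{\tilde{\mathcal{L}}_{\theta}\phi,\,J_{m,n}}=\innp{\phi,\,\tilde{\mathcal{L}}_{-\theta}J_{m,n}}=\lambda_{m,n}\innp{\phi,\,J_{m,n}}$, and conclude by Parseval, using the identity $\abs{\lambda_{m,n}}^2=m^2+n^2+2mn\cos2\theta$ and the elementary observation that $\tilde{\mathcal{L}}_{\theta}\phi\in L^2(\gamma)$ because all its ingredients are slowly increasing. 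The paper instead argues by approximation in the graph norm: first polynomials (the eigenrelation \eqref{atheta}), then functions whose Hermite coefficients are rapidly decreasing (via the N-representation theorem, Proposition~\ref{tm410}), then general $\phi\in C^2_{\uparrow}(\Rnum^2)$ via the approximation of identity $B_{\epsilon}\phi\in C_c^{\infty}(\Rnum^2)$ (Propositions~\ref{pp47} and~\ref{tm57}), invoking at each stage the closedness of $\mathcal{L}_{\theta}$ (Proposition~\ref{cld1}) to pass to the limit. Your duality argument is shorter and structurally cleaner: it needs neither the closedness of $\mathcal{L}_{\theta}$ nor the N-representation machinery, only the completeness of $\set{J_{m,n}}$ (Proposition~\ref{lm22}) and integration by parts against polynomials. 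The price is exactly the point you flag: the two adjoint relations $\innp{\partial u,\,v}=\innp{u,\,\partial^{*}v}$ and $\innp{\partial^{*}u,\,v}=\innp{u,\,\partial v}$ must be justified for $u$ slowly increasing and $v$ polynomial, either by estimating the boundary terms (polynomial times $e^{-(x^2+y^2)/2}$ over circles of radius $R\to\infty$) or, as you propose, by the compactly supported case plus Proposition~\ref{pp47}; either justification works and constitutes a far lighter use of the Appendix than the paper's. What the paper's heavier route buys is graph-norm approximation of $\phi$ by very regular functions, which fits the graph-density framework it exploits again in the proofs of Proposition~\ref{pp26} and Theorem~\ref{exp_normal}, whereas your argument produces the conclusion for each fixed $\phi$ directly.
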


\begin{thm} \label{exp_normal}
Let $\mathcal{A}_2$ be as in Theorem~\ref{lm212} and $\mathcal{L}_{\theta}$ be as in Definition~\ref{cld}.
For any $\theta\in(-\frac{\pi}{2},\,\frac{\pi}{2})$, $\mathcal{A}_2=\mathcal{L}_{\theta}$, i.e., $
   D(\mathcal{A}_2)=\mathcal{D}(\mathcal{L}_{\theta})$ and $\mathcal{A}_2\varphi=\mathcal{L}_{\theta}\varphi$ on $ \mathcal{D}(\mathcal{L}_{\theta})$.
\end{thm}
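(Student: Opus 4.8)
The plan is to prove that $\mathcal{A}_2$ and $\mathcal{L}_{\theta}$ act as the \emph{same spectral multiplier} with respect to the orthonormal basis $\set{J_{m,n}}$ furnished by Proposition~\ref{lm22}. Write $\lambda_{m,n}=-[(m+n)\cos\theta+\mi(m-n)\sin\theta]$ for the eigenvalue appearing in \eqref{atheta}. A short computation gives
\begin{equation*}
\abs{\lambda_{m,n}}^2=(m+n)^2\cos^2\theta+(m-n)^2\sin^2\theta=m^2+n^2+2mn\cos2\theta,
\end{equation*}
so the summability condition defining $\mathcal{D}(\mathcal{L}_{\theta})$ in \eqref{dom} is exactly $\sum_{m,n}\abs{\lambda_{m,n}}^2\abs{\innp{f,J_{m,n}}}^2<\infty$. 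Moreover, since $\theta\in(-\frac{\pi}{2},\frac{\pi}{2})$ we have $\cos\theta>0$, hence $\RE\lambda_{m,n}=-(m+n)\cos\theta\le0$, which will supply the uniform bound $\abs{e^{t\lambda_{m,n}}}\le1$ needed below.

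The first step is to determine the action of the semigroup on the basis, namely to show that $T_t^2 J_{m,n}=e^{t\lambda_{m,n}}J_{m,n}$ for all $t\ge0$. I would argue this through a finite-dimensional invariant subspace. From the Mehler formula \eqref{ousemi} one sees that $P_t$ maps the space $\mathcal{P}_N$ of polynomials of degree at most $N$ into itself (the Gaussian integral of a polynomial argument is again a polynomial of the same or lower degree in $x$), and by Proposition~\ref{2dim2} the family $\set{J_{m,n}:m+n\le N}$ is a basis of $\mathcal{P}_N$. On the finite-dimensional space $\mathcal{P}_N$ the family $\set{P_t}_{t\ge0}$ is a norm-continuous one-parameter matrix semigroup, hence $P_t|_{\mathcal{P}_N}=e^{tG_N}$ for a matrix generator $G_N$; differentiating \eqref{ousemi} at $t=0$ identifies $G_N$ on polynomials with the differential operator $\tilde{\mathcal{L}}_{\theta}$. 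Since $\tilde{\mathcal{L}}_{\theta}J_{m,n}=\lambda_{m,n}J_{m,n}$ by \eqref{atheta}, the function $J_{m,n}$ is an eigenvector of $G_N$, whence $P_t J_{m,n}=e^{t\lambda_{m,n}}J_{m,n}$. Because $T_t^2$ is the $L^2(\gamma)$-closure of $P_t$ and polynomials lie in $L^2(\gamma)$, the same identity transfers to $T_t^2$.

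Given the eigenrelation, for arbitrary $f=\sum_{m,n}b_{m,n}J_{m,n}\in L^2(\gamma)$ the continuity and linearity of $T_t^2$ yield $T_t^2 f=\sum_{m,n}b_{m,n}e^{t\lambda_{m,n}}J_{m,n}$, the series converging in $L^2(\gamma)$ because $\abs{e^{t\lambda_{m,n}}}\le1$. It then remains to identify the generator with the spectral multiplier. For the inclusion $\mathcal{D}(\mathcal{L}_{\theta})\subseteq D(\mathcal{A}_2)$ I would estimate
\begin{equation*}
\norm{\tfrac{1}{t}(T_t^2 f-f)-\mathcal{L}_{\theta}f}^2_{L^2(\gamma)}=\sum_{m,n}\abs{b_{m,n}}^2\,\abs{\tfrac{e^{t\lambda_{m,n}}-1}{t}-\lambda_{m,n}}^2,
\end{equation*}
and pass to the limit $t\to0^+$ by dominated convergence: each summand tends to $0$, while the elementary bound $\abs{\tfrac{e^{t\lambda}-1}{t}}\le\abs{\lambda}$ (valid since $\RE\lambda\le0$) gives the integrable majorant $4\abs{\lambda_{m,n}}^2\abs{b_{m,n}}^2$, summable precisely because $f\in\mathcal{D}(\mathcal{L}_{\theta})$. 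For the reverse inclusion, if $f\in D(\mathcal{A}_2)$ with $\mathcal{A}_2 f=g$, then pairing the difference quotient with each $J_{m,n}$ gives $\innp{g,J_{m,n}}=\lim_{t\to0^+}\tfrac{e^{t\lambda_{m,n}}-1}{t}\,b_{m,n}=\lambda_{m,n}b_{m,n}$; since $g\in L^2(\gamma)$, Parseval forces $\sum_{m,n}\abs{\lambda_{m,n}}^2\abs{b_{m,n}}^2<\infty$, i.e. $f\in\mathcal{D}(\mathcal{L}_{\theta})$, and $g=\mathcal{L}_{\theta}f$. The two inclusions together give $\mathcal{A}_2=\mathcal{L}_{\theta}$.

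I expect the main obstacle to be the first step, the rigorous evaluation $T_t^2 J_{m,n}=e^{t\lambda_{m,n}}J_{m,n}$: one must justify that the Mehler operator preserves each $\mathcal{P}_N$, that its restriction is genuinely a norm-continuous matrix semigroup whose generator coincides with $\tilde{\mathcal{L}}_{\theta}$, and that this polynomial-level identity passes to the abstract closure $T_t^2$. The domain bookkeeping in the second step is then routine once the sign condition $\RE\lambda_{m,n}\le0$ provides the dominating function; the only delicate point there is to obtain genuine equality of domains rather than mere containment, which is why the reverse direction is handled by testing against the basis.
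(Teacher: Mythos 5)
Your proof is correct, but it takes a genuinely different route from the paper's. The paper never computes the semigroup's action on the basis $\set{J_{m,n}}$. Instead it (i) shows via It\^{o}'s formula applied to the SDE (\ref{cp}) that $T^2_t\varphi=\varphi+\int_0^t T^2_s\mathcal{L}_{\theta}\varphi\,\dif s$ for $\varphi\in C^2_{\uparrow}(\Rnum^2)$ (this rests on Theorem~\ref{pop27}, whose proof consumes the Appendix's approximation-of-identity and N-representation machinery), hence $\mathcal{A}_2\varphi=\mathcal{L}_{\theta}\varphi$ there; (ii) upgrades this to the inclusion $\mathcal{L}_{\theta}\subseteq\mathcal{A}_2$ by graph density (Proposition~\ref{pp26}) and closedness of $\mathcal{A}_2$; and (iii) obtains equality of domains purely abstractly: both operators are normal (Proposition~\ref{pop25}, Theorem~\ref{lm212}), and a normal operator is maximally normal \cite[Theorem~13.32]{ru}, so a normal extension of a normal operator must coincide with it. You instead diagonalize the semigroup itself, $T^2_t J_{m,n}=e^{t\lambda_{m,n}}J_{m,n}$, via the invariant finite-dimensional polynomial subspaces $\mathcal{P}_N$ and the matrix-exponential representation, and then prove both inclusions of domains by hand (dominated convergence with the majorant $4\abs{\lambda_{m,n}}^2\abs{b_{m,n}}^2$ one way; testing the difference quotient against the basis plus Parseval the other way). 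Your route uses neither stochastic calculus nor the maximal-normality theorem---normality of $\mathcal{A}_2$ drops out as a corollary of the multiplier form rather than being an input---and it delivers more than the statement asks: the explicit spectral representation $T^2_t f=\sum_{m,n}b_{m,n}e^{t\lambda_{m,n}}J_{m,n}$. What the paper's route buys is that it avoids your ``main obstacle'' entirely (no eigenrelation for $T^2_t$ is ever needed), and its It\^{o}-formula argument would survive in settings where the eigenfunctions are not explicitly known. Two points in your first step do need the care you anticipated: the Mehler formula (\ref{ousemi}) is stated only for $C_b(\Rnum^2)$, so applying it to unbounded polynomials requires the same density/dominated-convergence extension to $C^0_{\uparrow}(\Rnum^2)$ that opens the paper's own proof; and $\sqrt{1-e^{-2t\cos\theta}}$ is not differentiable at $t=0$, so to differentiate (\ref{ousemi}) there you should first carry out the Gaussian integration in $y$ (odd moments of $\gamma$ vanish), after which only the smooth quantity $1-e^{-2t\cos\theta}$ remains and the derivative at $t=0$ is indeed $\tilde{\mathcal{L}}_{\theta}$ on polynomials.
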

The second aim of this section is to show that the operator $\mathcal{L}_{\theta}$ defined above satisfies the following theorem,
which is named as the {\bf normal diffusion operator} analogous to the symmetric diffusion operator given by Stroock \cite{bd,str0,str}.
\begin{thm}{\bf (normal diffusion operator)}\label{main_thm}
The densely defined linear closed operator $\mathcal{L}_{\theta}$ defined in Proposition~\ref{cld} is a normal diffusion operator. Namely, it satisfies that:
     \begin{itemize}
        \item[\textup{1)}] $\mathcal{L}_{\theta}$ is a normal operator on $\mathcal{D}(\mathcal{L}_{\theta} )$.
        \item[\textup{2)}] $1\in \mathcal{D}(\mathcal{L}_{\theta} )$ and $\mathcal{L}_{\theta}1=0$.
        \item[\textup{3)}] There exists a linear subspace $
   \mathcal{D}\subset \set{\phi\in \mathcal{D}(\mathcal{L}_{\theta})\cap L^4(\gamma):\,\mathcal{L}_{\theta}\phi\in L^4(\gamma),\,\abs{\phi}^2\in \mathcal{D}(\mathcal{L}_{\theta})} $ such that $\mathrm{graph}(\mathcal{L}_{\theta}|\mathcal{D} ) $ is dense in $\mathrm{graph}(\mathcal{L}_{\theta})$.
        \item[\textup{4)}] For any $\theta\in(-\frac{\pi}{2},\,\frac{\pi}{2})$, define
\begin{equation*}
\bilf{\phi,\,\psi}=\frac{1}{2\cos\theta}[\mathcal{L}_{\theta}(\phi\bar{\psi })- \phi\mathcal{L}_{\theta}(\bar{\psi })-\bar{\psi }\mathcal{L}_{\theta}(\phi)]
\end{equation*}
for $\phi,\,\psi\in \mathcal{D}$. Then $\bilf{\cdot,\,\cdot}:\, \mathcal{D}\times \mathcal{D}\to L^2(\gamma)$ is a non-negative definite bilinear form on the field $\Cnum$.
        \item[\textup{5)}] (Diffusion property)If $\vec{\phi}=(\phi_1,\dots,\phi_n)\in \mathcal{D}^n$ and $F\in C^2_{\uparrow}(\Cnum^{n})$, then $F\circ\vec{\phi}\in \mathcal{D}(\mathcal{L}_{\theta})$ and
\begin{align}\label{ltheta}
  \mathcal{L}_{\theta}(F\circ\vec{\phi})&=\cos\theta  \sum_{i,j=1}^n \bilf{\phi_i,\,\bar{\phi}_j}\frac{\partial^2 F}{\partial z_i\partial{ z_j}}\circ\vec{\phi} +\bilf{\bar{\phi}_i,\,\phi_j}\frac{\partial^2 F}{\partial \bar{z}_i\partial\bar{ z}_j}\circ\vec{\phi} +2\bilf{\phi_i,\,\phi_j}\frac{\partial^2 F}{\partial z_i\partial\bar{ z}_j}\circ\vec{\phi} \nonumber \\ &+\sum_{i=1}^n \mathcal{L}_{\theta}\phi_i\frac{\partial F}{\partial z_i}\circ\vec{\phi}+ \mathcal{L}_{\theta}\bar{\phi}_i\frac{\partial F}{\partial \bar{z}_i}\circ\vec{\phi}.
\end{align}
        \item[\textup{6)}] $\mathcal{L}_{\theta}$ has an extension $\mathcal{A}_1$ to $L^1(\gamma)$ with domain $\mathcal{D}(\mathcal{A}_1)$ such that $$
     \mathcal{D}(\mathcal{L}_{\theta})=\set{\phi\in D(\mathcal{A}_1)\cap L^2(\gamma):\,\mathcal{A}_1\phi\in L^2(\gamma) },$$
     i.e., the closure of $\mathcal{L}_{\theta}$ in $L^1(\gamma)$ is $\mathcal{A}_1$.
     \end{itemize}
\end{thm}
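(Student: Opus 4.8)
My strategy is to treat the first three items as direct consequences of the spectral picture already in place, to reduce items~4 and~5 to the differential representation of $\mathcal{L}_{\theta}$ supplied by Theorem~\ref{pop27}, and to handle item~6 by a semigroup-consistency argument across the $L^p(\gamma)$ scale. Item~1 is immediate: Theorem~\ref{exp_normal} gives $\mathcal{A}_2=\mathcal{L}_{\theta}$ and Theorem~\ref{lm212} says $\mathcal{A}_2$ is normal. For item~2, since $1=J_{0,0}$ the defining series in (\ref{dom}) reduces to a single term of weight $0$, so $1\in\mathcal{D}(\mathcal{L}_{\theta})$ and $\mathcal{L}_{\theta}1=0$ by (\ref{atheta}). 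For item~3 I would take $\mathcal{D}$ to be the algebra of polynomials in $z,\bar z$, i.e.\ the finite linear span of $\set{J_{m,n}}$. Each such $\phi$ lies in every $L^p(\gamma)$ by Gaussian integrability; by Theorem~\ref{pop27}, $\mathcal{L}_{\theta}\phi$ is again a polynomial, hence in $L^4(\gamma)$; and $\abs{\phi}^2=\phi\bar\phi$ is a polynomial, hence in $\mathcal{D}(\mathcal{L}_{\theta})$. Graph density follows because for $f=\sum b_{m,n}J_{m,n}\in\mathcal{D}(\mathcal{L}_{\theta})$ the truncations $\sum_{m+n\le N}b_{m,n}J_{m,n}$ converge to $f$ in graph norm, the relevant weights being precisely those controlled by (\ref{dom}).

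For item~4 I would, on $\mathcal{D}$, replace $\mathcal{L}_{\theta}$ by the differential operator of (\ref{deri}) and compute the carr\'e du champ. A routine Leibniz expansion shows that for smooth $f,g$ the first-order transport terms cancel and only the mixed part of $4\cos\theta\,\partial\bar\partial$ survives, giving $\mathcal{L}_{\theta}(fg)-f\mathcal{L}_{\theta}g-g\mathcal{L}_{\theta}f=4\cos\theta\,(\partial f\,\bar\partial g+\bar\partial f\,\partial g)$. Putting $f=\phi$, $g=\bar\psi$ yields $\bilf{\phi,\psi}=2(\partial\phi\,\bar\partial\bar\psi+\bar\partial\phi\,\partial\bar\psi)$, which is bilinear in the pair $(\phi,\bar\psi)$. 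Taking $\psi=\phi$ and using $\bar\partial\bar\phi=\overline{\partial\phi}$, $\partial\bar\phi=\overline{\bar\partial\phi}$ gives $\bilf{\phi,\phi}=2(\abs{\partial\phi}^2+\abs{\bar\partial\phi}^2)\ge 0$; more generally $\bilf{\phi_i,\phi_j}=2\innp{u_i,\,u_j}$ with $u_i=(\bar\partial\phi_i,\partial\phi_i)$, a Gram form, so $\bigl[\bilf{\phi_i,\phi_j}\bigr]$ is non-negative definite.

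Item~5 is the longest computation. Since $F\in C^2_{\uparrow}(\Cnum^n)$ and the $\phi_i$ are polynomials, $F\circ\vec\phi\in C^2_{\uparrow}(\Rnum^2)$, so it lies in $\mathcal{D}(\mathcal{L}_{\theta})$ by Theorem~\ref{pop27} and $\mathcal{L}_{\theta}(F\circ\vec\phi)$ equals the differential operator of (\ref{deri}) applied to it. I would then use the Wirtinger chain rule $\partial(F\circ\vec\phi)=\sum_i(F_{z_i}\circ\vec\phi)\,\partial\phi_i+(F_{\bar z_i}\circ\vec\phi)\,\partial\bar\phi_i$ and its $\bar\partial$-analogue, differentiate once more, and collect terms. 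The coefficients of $F_{z_i}$ and $F_{\bar z_i}$ reassemble into $\mathcal{L}_{\theta}\phi_i$ and $\mathcal{L}_{\theta}\bar\phi_i$ (the second-order contribution combining with the two transport terms to rebuild (\ref{deri}) applied to $\phi_i$); the second-order coefficients, after symmetrizing over the pair $(i,j)$ and using $F_{z_i\bar z_j}=F_{\bar z_j z_i}$, reproduce exactly the carr\'e-du-champ coefficients of item~4, yielding (\ref{ltheta}).

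Finally, item~6 rests on the consistency of the family $\set{T_t^p}$. All $T_t^p$ are closures of the same $P_t$, so they agree on $L^1(\gamma)\cap L^2(\gamma)=L^2(\gamma)$, and hence so do the resolvents $R(\lambda,\mathcal{A}_p)=\int_0^\infty e^{-\lambda t}T_t^p\,\dif t$. If $\phi\in\mathcal{D}(\mathcal{L}_{\theta})=D(\mathcal{A}_2)$, then the $L^2$-convergence of its difference quotients forces $L^1$-convergence (as $\gamma$ is a probability measure), so $\phi\in D(\mathcal{A}_1)$ with $\mathcal{A}_1\phi=\mathcal{A}_2\phi\in L^2(\gamma)$. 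Conversely, if $\phi\in D(\mathcal{A}_1)\cap L^2(\gamma)$ with $\mathcal{A}_1\phi\in L^2(\gamma)$, set $g=\lambda\phi-\mathcal{A}_1\phi\in L^2(\gamma)$; then $\phi=R(\lambda,\mathcal{A}_1)g=R(\lambda,\mathcal{A}_2)g\in D(\mathcal{A}_2)$, which gives the domain identity. That $\mathcal{A}_1$ is the $L^1$-closure of $\mathcal{L}_{\theta}$ then follows because the polynomials $\mathcal{D}$ are invariant under $P_t$ (the Mehler formula (\ref{ousemi}) sends polynomials to polynomials) and dense in $L^1(\gamma)$, hence form a core. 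I expect the genuine obstacles to be the term-by-term book-keeping in item~5 and, conceptually, the $L^p$-consistency and core assertion underlying item~6; the remaining items are essentially immediate from the spectral and differential descriptions of $\mathcal{L}_{\theta}$.
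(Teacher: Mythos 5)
Your proposal is correct, and for items 2)--5) it coincides with the paper's own proof: the paper also takes $\mathcal{D}=\mathrm{span}\set{J_{m,n}}=\mathrm{span}\set{z^m\bar z^n}$ (Proposition~\ref{pp26}) and proves 4)--5) by the same Wirtinger chain-rule/Leibniz computation through Theorem~\ref{pop27}, arriving at exactly your formula $\bilf{\phi,\,\psi}=2[\frac{\partial\phi}{\partial z}\overline{\frac{\partial\psi}{\partial z}}+\frac{\partial\phi}{\partial\bar z}\overline{\frac{\partial\psi}{\partial\bar z}}]$ in (\ref{bilinear}); your Gram-matrix justification of non-negative definiteness and your truncation argument for graph density are in fact more explicit than the paper's one-line assertions. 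The genuine differences are in 1) and 6). For 1), the paper does not quote Theorems~\ref{lm212} and \ref{exp_normal}; it proves normality directly (Proposition~\ref{pop25}), showing via Parseval's identity that $\mathcal{L}_{\theta}^*=\mathcal{L}_{-\theta}$, that the domains agree by (\ref{dom000}), and that $\mathcal{L}_{\theta}^*\mathcal{L}_{\theta}=\mathcal{L}_{\theta}\mathcal{L}_{\theta}^*$. Your shortcut is logically sound, since Theorem~\ref{exp_normal} is established before the theorem in question, but it could not replace the direct argument in the paper's organization: the paper derives Theorem~\ref{exp_normal} \emph{from} the normality of $\mathcal{L}_{\theta}$ (via maximal normality of normal operators, Rudin's Theorem~13.32), so your route silently rests on Proposition~\ref{pop25} anyway, and the direct Parseval computation is what keeps the whole scheme non-circular. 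For 6), the paper (Corollary~\ref{cor4_8}) obtains the nontrivial inclusion by the difference-quotient identity $\frac{T^2_t\phi-\phi}{t}=\frac1t\int_0^t T^2_s\mathcal{A}_1\phi\,\dif s\to\mathcal{A}_1\phi$ in $L^2(\gamma)$, whereas you use agreement of resolvents; both hinge on the same consistency $T^1_t|_{L^2(\gamma)}=T^2_t$ and are equally valid. Finally, your core argument (polynomials are eigenfunctions of $P_t$, hence semigroup-invariant, and dense in $L^1(\gamma)$) actually supplies a proof of the closure clause of 6) that the paper asserts but never argues; that addition is a small but real improvement over the paper's treatment.
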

Proofs of Theorem~\ref{pop27}-\ref{main_thm} are presented in Section~\ref{sec_31}.
\subsection{Proofs of Theorems}\label{sec_31}
\begin{prop}\label{cld1}
  Let $\mathcal{L}_{\theta}$ be as in Definition~\ref{cld}. Then $\mathcal{L}_{\theta} $ is closed on $ L^2(\gamma)$.
\end{prop}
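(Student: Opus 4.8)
The plan is to show that $\mathcal{L}_{\theta}$, as defined by its diagonal action on the orthonormal basis $\set{J_{m,n}}$ with eigenvalues $\lambda_{m,n}=-[(m+n)\cos\theta+\mi(m-n)\sin\theta]$, is a closed operator on $L^2(\gamma)$. The natural approach is to exploit the fact that an operator which is diagonal with respect to an orthonormal basis and whose domain is the ``maximal'' natural domain (those $f$ for which $\sum_{m,n}\abs{\lambda_{m,n}}^2\abs{\innp{f,J_{m,n}}}^2<\infty$) is automatically closed; this is a standard fact for normal/diagonal operators, and the domain condition in (\ref{dom}) is exactly $\sum_{m,n}\abs{\lambda_{m,n}}^2\abs{\innp{f,J_{m,n}}}^2<\infty$ since $\abs{\lambda_{m,n}}^2=(m+n)^2\cos^2\theta+(m-n)^2\sin^2\theta=m^2+n^2+2mn\cos2\theta$. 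So the first step is to record this identity, verifying that the weight appearing in Definition~\ref{cld} really is $\abs{\lambda_{m,n}}^2$.

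The core of the argument is a direct verification of closedness from the definition. I would take a sequence $\set{f_k}\subset\mathcal{D}(\mathcal{L}_{\theta})$ with $f_k\to f$ in $L^2(\gamma)$ and $\mathcal{L}_{\theta}f_k\to g$ in $L^2(\gamma)$, and show that $f\in\mathcal{D}(\mathcal{L}_{\theta})$ and $\mathcal{L}_{\theta}f=g$. Writing $f_k=\sum_{m,n}b^{(k)}_{m,n}J_{m,n}$ and using that $\set{J_{m,n}}$ is an orthonormal basis (Proposition~\ref{lm22}), convergence in $L^2(\gamma)$ implies coefficientwise convergence: $b^{(k)}_{m,n}=\innp{f_k,J_{m,n}}\to\innp{f,J_{m,n}}=:b_{m,n}$ for each fixed $(m,n)$, and likewise $\innp{\mathcal{L}_{\theta}f_k,J_{m,n}}=\lambda_{m,n}b^{(k)}_{m,n}\to\innp{g,J_{m,n}}=:c_{m,n}$. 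Combining these gives $c_{m,n}=\lambda_{m,n}b_{m,n}$ for every $(m,n)$. Then by Fatou's lemma applied to the partial sums over any finite index set $S$,
\begin{equation*}
\sum_{(m,n)\in S}\abs{\lambda_{m,n}}^2\abs{b_{m,n}}^2=\sum_{(m,n)\in S}\abs{c_{m,n}}^2\le\liminf_k\sum_{(m,n)\in S}\abs{\lambda_{m,n}b^{(k)}_{m,n}}^2\le\liminf_k\norm{\mathcal{L}_{\theta}f_k}^2_{L^2(\gamma)}<\infty,
\end{equation*}
and letting $S$ exhaust $\Nnum\times\Nnum$ shows $\sum_{m,n}\abs{\lambda_{m,n}}^2\abs{b_{m,n}}^2<\infty$, i.e. $f\in\mathcal{D}(\mathcal{L}_{\theta})$. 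Finally, since $\innp{g,J_{m,n}}=c_{m,n}=\lambda_{m,n}b_{m,n}=\innp{\mathcal{L}_{\theta}f,J_{m,n}}$ for all $(m,n)$ and $\set{J_{m,n}}$ is a basis, we conclude $g=\mathcal{L}_{\theta}f$.

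I do not expect a serious obstacle here; this is essentially the standard proof that a normal operator given in diagonal form on a maximal domain is closed. The one point requiring a little care is the interchange of limit and summation: I must avoid asserting $\mathcal{L}_{\theta}f_k\to\mathcal{L}_{\theta}f$ termwise in norm before closedness is established, which is why the argument is routed through coefficientwise convergence plus Fatou's lemma rather than through a naive passage to the limit inside the full sum. An equivalent and perhaps cleaner route, if preferred, is to observe that $\mathcal{L}_{\theta}$ is unitarily equivalent (via the basis $\set{J_{m,n}}$) to the multiplication operator by the sequence $\set{\lambda_{m,n}}$ on $\ell^2(\Nnum\times\Nnum)$ with its maximal domain, and to cite the elementary fact that such multiplication operators are always closed; the Fatou argument above is simply the explicit unpacking of that fact.
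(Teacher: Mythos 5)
Your proposal is correct and takes essentially the same approach as the paper: a direct verification of closedness via the orthonormal expansion in $\set{J_{m,n}}$, coefficientwise convergence of $\innp{f_k,\,J_{m,n}}$ and $\innp{\mathcal{L}_{\theta}f_k,\,J_{m,n}}$, and a Fatou-type passage to the limit to conclude $f\in \mathcal{D}(\mathcal{L}_{\theta})$. The only cosmetic difference is the final step, where the paper runs a second Fatou/Parseval argument to get $\norm{\mathcal{L}_{\theta}f-g}=0$ while you identify the coefficients of $g$ and $\mathcal{L}_{\theta}f$ directly; the two are interchangeable.
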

\begin{proof}
   Suppose that $f_k\in  \mathcal{D}(\mathcal{L}_{\theta}) $ such that $f_k\to f,\,\mathcal{L}_{\theta}f_k\to g$ in $L^2(\gamma)$, we will show that $f\in  \mathcal{D}(\mathcal{L}_{\theta})$ and $\mathcal{L}_{\theta}f=g$.
   In fact, by Fatou's lemma and Parseval's identity the triangle inequality we have that
   \begin{align*}
    & \sum_{m,n=0}^{\infty} (m^2+n^2+2mn\cos2\theta)\abs{\innp{f,\,J_{m,n}}}^2 \\
    &\le \liminf_{k\to\infty} \sum_{m,n=0}^{\infty} (m^2+n^2+2mn\cos2\theta)\abs{\innp{f_k,\,J_{m,n}}}^2 \\
      & =\liminf_{k\to\infty} \norm{\mathcal{L}_{\theta}f_k }^2  \\
      &=\norm{g}^2<\infty.
   \end{align*}
   Thus $f\in  \mathcal{D}(\mathcal{L}_{\theta})$. In addition, since for each $m,n\ge 0$, as $k\to \infty$, $$\innp{[(m+n)\cos \theta +\mi (m-n)\sin\theta]f_k+g,\,J_{m,n}}\to \innp{[(m+n)\cos \theta +\mi (m-n)\sin\theta]f+g,\,J_{m,n}},$$ it follows from Parseval's identity and Fatou's lemma that
   \begin{align*}
     \norm{\mathcal{L}_{\theta}f-g }^2  & =\sum_{m,n=0}^{\infty}\abs{ \innp{[(m+n)\cos \theta +\mi (m-n)\sin\theta]f+g,\,J_{m,n}}}^2 \\
          &\le\liminf_{k\to \infty}  \sum_{m,n=0}^{\infty}\abs{ \innp{[(m+n)\cos \theta +\mi (m-n)\sin\theta]f_k+g,\,J_{m,n}}}^2\\
          &= \liminf_{k\to \infty} \norm{\mathcal{L}_{\theta}f_k-g }^2 =0.
   \end{align*}
   Thus $\mathcal{L}_{\theta}f=g$.
\end{proof}

\begin{rem} Suppose that $H_{m,n}(x,y)=H_{m }(x )H_{n}(y)$ is the Hermite polynomial of two variables. Then it follows from Proposition~\ref{2dim2} that
\begin{equation*}
   \sum_{m+n=l}\abs{\innp{f,\,J_{m,n}}}^2 = \sum_{m+n=l}\abs{\innp{f,\,H_{m,n}}}^2.
\end{equation*}
Together with $$(m+n)^2\ge m^2+n^2+2mn\cos2\theta=(m+n)^2\cos \theta^2+(m-n)^2\sin\theta^2\ge (m+n)^2\cos \theta^2,$$  we deduce that
   \begin{align}\label{dom000}
     \mathcal{D}(\mathcal{L}_{\theta})&=\set{f\in L^2(\gamma),\,\sum_{m=0}^{\infty}\sum_{n=0}^{\infty} (m+n)^2\abs{\innp{f,\,J_{m,n}}}^2<\infty  }\nonumber\\
     &=\set{f\in L^2(\gamma),\,\sum_{m=0}^{\infty}\sum_{n=0}^{\infty} (m+n)^2\abs{\innp{f,\,H_{m,n}}}^2<\infty  },
   \end{align}
   that is to say, $\mathcal{D}(\mathcal{L}_{\theta})$ is independent to $\theta$. In fact, the right hand side of (\ref{dom000}) is exact the Sobolev weighted space $H^2_{\gamma}$, please refer to \cite{lun} for details.
\end{rem}

\begin{prop}\label{pop25}
   For any $\theta\in(-\frac{\pi}{2},\,\frac{\pi}{2})$, $\mathcal{L}_{\theta}$ is a normal operator on $\mathcal{D}(\mathcal{L}_{\theta} )$ such that $1\in \mathcal{D}(\mathcal{L}_{\theta} )$ and $\mathcal{L}_{\theta}1=0$.
\end{prop}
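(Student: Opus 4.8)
The plan is to treat $\mathcal{L}_{\theta}$ as the diagonal operator associated with the orthonormal basis $\set{J_{m,n}}$ of $L^2(\gamma)$ (Proposition~\ref{lm22}), for which normality becomes a spectral triviality once the adjoint is correctly identified. Write $\lambda_{m,n}=-[(m+n)\cos\theta+\mi(m-n)\sin\theta]$ for the eigenvalue attached to $J_{m,n}$ and $\hat f(m,n)=\innp{f,\,J_{m,n}}$ for the Fourier coefficients. A direct expansion gives $\abs{\lambda_{m,n}}^2=(m+n)^2\cos^2\theta+(m-n)^2\sin^2\theta=m^2+n^2+2mn\cos2\theta$, so the defining condition of $\mathcal{D}(\mathcal{L}_{\theta})$ is exactly $\sum_{m,n}\abs{\lambda_{m,n}}^2\abs{\hat f(m,n)}^2<\infty$, while $\mathcal{L}_{\theta}f=\sum_{m,n}\lambda_{m,n}\hat f(m,n)J_{m,n}$. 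The two elementary assertions are then immediate: since $J_{0,0}\equiv1$ and $\lambda_{0,0}=0$, the only nonzero coefficient of the constant function $1$ is $\hat 1(0,0)=1$, so $\sum_{m,n}\abs{\lambda_{m,n}}^2\abs{\hat1(m,n)}^2=0<\infty$ and $\mathcal{L}_{\theta}1=\lambda_{0,0}J_{0,0}=0$. Note also that $\mathcal{D}(\mathcal{L}_{\theta})$ contains every finite linear combination of the $J_{m,n}$, hence is dense, and is closed by Proposition~\ref{cld1}; thus $\mathcal{L}_{\theta}$ is a closed, densely defined operator and the notion of normality in the sense of \cite[p368]{ru} applies.

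The core of the argument is to show that the Hilbert-space adjoint equals $\mathcal{L}_{\theta}^{*}=\mathcal{L}_{-\theta}$, i.e. the diagonal operator with eigenvalues $\overline{\lambda_{m,n}}$ and the \emph{same} domain $\mathcal{D}(\mathcal{L}_{\theta})$ (the domain being $\theta$-independent since $\abs{\overline{\lambda_{m,n}}}=\abs{\lambda_{m,n}}$). One inclusion would be a one-line computation: for $f\in\mathcal{D}(\mathcal{L}_{\theta})$ and $g\in\mathcal{D}(\mathcal{L}_{-\theta})=\mathcal{D}(\mathcal{L}_{\theta})$, Parseval gives
\begin{equation*}
\innp{\mathcal{L}_{\theta}f,\,g}=\sum_{m,n}\lambda_{m,n}\hat f(m,n)\overline{\hat g(m,n)}=\innp{f,\,\mathcal{L}_{-\theta}g},
\end{equation*}
so $\mathcal{L}_{-\theta}\subset\mathcal{L}_{\theta}^{*}$. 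For the reverse inclusion I would take $g\in D(\mathcal{L}_{\theta}^{*})$ with $\mathcal{L}_{\theta}^{*}g=h$ and test the identity $\innp{\mathcal{L}_{\theta}f,\,g}=\innp{f,\,h}$ against $f=J_{m,n}$ (which lies in $\mathcal{D}(\mathcal{L}_{\theta})$), obtaining $\overline{\lambda_{m,n}}\,\hat g(m,n)=\hat h(m,n)$ for every $m,n$. Since $h\in L^2(\gamma)$, Parseval then forces $\sum_{m,n}\abs{\overline{\lambda_{m,n}}}^2\abs{\hat g(m,n)}^2=\norm{h}^2<\infty$, so $g\in\mathcal{D}(\mathcal{L}_{\theta})$ and $h=\mathcal{L}_{-\theta}g$; hence $\mathcal{L}_{\theta}^{*}\subset\mathcal{L}_{-\theta}$ and equality holds.

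With the adjoint in hand, normality would follow by comparing the two compositions. Using $\widehat{\mathcal{L}_{\theta}f}(m,n)=\lambda_{m,n}\hat f(m,n)$ and the elementary bound $\abs{\lambda_{m,n}}^2\le\tfrac12(1+\abs{\lambda_{m,n}}^4)$, one checks
\begin{equation*}
D(\mathcal{L}_{\theta}^{*}\mathcal{L}_{\theta})=D(\mathcal{L}_{\theta}\mathcal{L}_{\theta}^{*})=\Big\{f\in L^2(\gamma):\ \sum_{m,n}\abs{\lambda_{m,n}}^4\abs{\hat f(m,n)}^2<\infty\Big\},
\end{equation*}
and on this common domain both compositions act as $f\mapsto\sum_{m,n}\abs{\lambda_{m,n}}^2\hat f(m,n)J_{m,n}$, since $\overline{\lambda_{m,n}}\lambda_{m,n}=\lambda_{m,n}\overline{\lambda_{m,n}}=\abs{\lambda_{m,n}}^2$. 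Therefore $\mathcal{L}_{\theta}^{*}\mathcal{L}_{\theta}=\mathcal{L}_{\theta}\mathcal{L}_{\theta}^{*}$, which is exactly the definition of a normal operator.

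I expect the genuine obstacle to be the correct identification of the adjoint's domain, $D(\mathcal{L}_{\theta}^{*})=\mathcal{D}(\mathcal{L}_{\theta})$: the inclusion $\mathcal{L}_{-\theta}\subset\mathcal{L}_{\theta}^{*}$ is routine, but in the reverse direction one must avoid assuming a priori that $g$ has finite $\mathcal{L}_{\theta}$-norm and instead \emph{derive} the square-summability of $\overline{\lambda_{m,n}}\hat g(m,n)$ from $h\in L^2(\gamma)$ alone. Everything else is bookkeeping with Parseval's identity.
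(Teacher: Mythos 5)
Your proposal is correct and follows essentially the same route as the paper: identify the adjoint as $\mathcal{L}^*_{\theta}=\mathcal{L}_{-\theta}$ via Parseval's identity in the $J_{m,n}$ basis, and then verify that both compositions $\mathcal{L}^*_{\theta}\mathcal{L}_{\theta}$ and $\mathcal{L}_{\theta}\mathcal{L}^*_{\theta}$ act diagonally with the eigenvalues $m^2+n^2+2mn\cos 2\theta$. The one place where you go beyond the paper is exactly the point you flagged as the genuine obstacle: the paper pairs $f,g$ both taken in $\mathcal{D}(\mathcal{L}_{\theta})$, which only yields the inclusion $\mathcal{L}_{-\theta}\subset\mathcal{L}^*_{\theta}$, and then asserts $\mathcal{D}(\mathcal{L}^*_{\theta})=\mathcal{D}(\mathcal{L}_{-\theta})$ by appealing to the $\theta$-independence of the domain --- but that argument only gives $\mathcal{D}(\mathcal{L}_{\theta})=\mathcal{D}(\mathcal{L}_{-\theta})$, not control of the adjoint's domain. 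Your test-function argument (pairing against $f=J_{m,n}$ to get $\overline{\lambda_{m,n}}\,\innp{g,\,J_{m,n}}=\innp{h,\,J_{m,n}}$ and then invoking Parseval for $h\in L^2(\gamma)$) supplies the missing reverse inclusion $\mathcal{L}^*_{\theta}\subset\mathcal{L}_{-\theta}$, so your write-up is the more complete version of the same proof.
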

\begin{proof}
Suppose that $f,g\in \mathcal{D}(\mathcal{L}_{\theta} )$. It follows from Parseval's identity that
\begin{align*}
  \innp{\mathcal{L}_{\theta}f,\,g} & = -\sum_{m,n}^{\infty} [(m+n)\cos \theta +\mi (m-n)\sin\theta]\innp{f,\,J_{m,n}}\overline{\innp{g,\,J_{m,n}}}\\
  & =-\sum_{m,n}^{\infty}\innp{f,\,J_{m,n}}\overline{[(m+n)\cos \theta - \mi (m-n)\sin\theta]\innp{g,\,J_{m,n}}}\\
  & = \innp{f,\,\mathcal{L}_{-\theta}g}.
\end{align*}
Thus, the adjoint operator of $\mathcal{L}_{\theta}$ is $\mathcal{L}^*_{\theta}=\mathcal{L}_{-\theta}$. The equality (\ref{dom000}) implies that $\mathcal{D}(\mathcal{L}_{\theta} )=\mathcal{D}(\mathcal{L}_{-\theta} )=\mathcal{D}(\mathcal{L}^*_{\theta} )$. And for each $f\in \mathcal{D}(\mathcal{L}_{\theta} )$ such that $\mathcal{L}_{\theta}f\in \mathcal{D}(\mathcal{L}^*_{\theta})$, we have that
\begin{align*}
 \mathcal{L}^*_{\theta} \mathcal{L}_{\theta}f  =\sum_{m,n}  (m^2+n^2+2mn\cos2\theta){\innp{f,\,J_{m,n}}}J_{m,n}(z)=\mathcal{L}_{\theta}\mathcal{L}^*_{\theta}f.
\end{align*}
Therefore, $\mathcal{L}_{\theta}$ is a normal operator on $\mathcal{D}(\mathcal{L}_{\theta} )$. $1\in \mathcal{D}(\mathcal{L}_{\theta} )$ and $\mathcal{L}_{\theta}1=0$ is trivial.
\end{proof}
\begin{prop}\label{pp26}
Denote by $\mathcal{D}=\mathrm{span}\set{J_{m,n},\,m,n\ge 0}$ the linear span (also called the linear hull) of complex Hermite polynomials. Then
\begin{equation}\label{sp}
   \mathcal{D}\subset \set{\phi\in \mathcal{D}(\mathcal{L}_{\theta})\cap L^4(\gamma):\,\mathcal{L}_{\theta}\phi\in L^4(\gamma),\,\abs{\phi}^2\in \mathcal{D}(\mathcal{L}_{\theta})}
\end{equation}
and $\mathrm{graph}(\mathcal{L}_{\theta}|\mathcal{D} ) $ is dense in  $\mathrm{graph}(\mathcal{L}_{\theta}) $.
\end{prop}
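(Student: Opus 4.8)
The plan is to reduce the whole statement to one structural fact: the linear span $\mathcal{D}=\mathrm{span}\set{J_{m,n}:\,m,n\ge0}$ coincides with the algebra $\mathcal{P}$ of all complex polynomials in $z$ and $\bar z$ (equivalently in the real variables $x,y$). The inclusion $\mathcal{D}\subset\mathcal{P}$ is immediate, since each $J_{m,n}$ is a polynomial of degree $m+n$. For the reverse inclusion I would invoke Proposition~\ref{2dim2}: for each $l$ the families $\set{J_{k,l-k}}$ and $\set{H_k(x)H_{l-k}(y)}$ span the same subspace, and the products $H_k(x)H_{l-k}(y)$ exhaust all polynomials of total degree $l$, so every polynomial lies in $\mathcal{D}$. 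Establishing $\mathcal{D}=\mathcal{P}$ first is what makes everything else routine.

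Granting $\mathcal{D}=\mathcal{P}$, I would check the three membership conditions in (\ref{sp}) in turn. Every element of $\mathcal{D}$ is a finite combination of the $J_{m,n}$, so the defining sum in (\ref{dom}) is finite and $\mathcal{D}\subset\mathcal{D}(\mathcal{L}_{\theta})$; moreover, by (\ref{opl}) the operator $\mathcal{L}_{\theta}$ acts diagonally on the $J_{m,n}$, hence maps $\mathcal{D}$ into itself, so $\mathcal{L}_{\theta}\phi$ is again a polynomial. Because $\gamma$ is Gaussian and therefore has finite moments of all orders, $\mathcal{P}\subset L^p(\gamma)$ for every $p$, which yields simultaneously $\phi\in L^4(\gamma)$ and $\mathcal{L}_{\theta}\phi\in L^4(\gamma)$. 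The one point that genuinely uses more than linearity — and the step I would treat most carefully — is $\abs{\phi}^2\in\mathcal{D}(\mathcal{L}_{\theta})$: here I exploit that $\mathcal{P}$ is closed under multiplication and under complex conjugation (if $\phi\in\mathcal{P}$ then $\bar\phi\in\mathcal{P}$), so $\abs{\phi}^2=\phi\bar\phi\in\mathcal{P}=\mathcal{D}\subset\mathcal{D}(\mathcal{L}_{\theta})$.

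For the graph density I would use a truncated orthonormal expansion. Given $f\in\mathcal{D}(\mathcal{L}_{\theta})$ with $f=\sum_{m,n}b_{m,n}J_{m,n}$, $b_{m,n}=\innp{f,\,J_{m,n}}$, set $f_N=\sum_{m+n\le N}b_{m,n}J_{m,n}\in\mathcal{D}$, so that $f_N\to f$ in $L^2(\gamma)$ by Parseval. The key elementary identity is
\begin{equation*}
\abs{(m+n)\cos\theta+\mi(m-n)\sin\theta}^2=(m+n)^2\cos^2\theta+(m-n)^2\sin^2\theta=m^2+n^2+2mn\cos2\theta,
\end{equation*}
which shows that the weight appearing in $\norm{\mathcal{L}_{\theta}f}^2$ is exactly the weight defining $\mathcal{D}(\mathcal{L}_{\theta})$ in (\ref{dom}). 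Hence
\begin{equation*}
\norm{\mathcal{L}_{\theta}f-\mathcal{L}_{\theta}f_N}^2=\sum_{m+n>N}(m^2+n^2+2mn\cos2\theta)\abs{b_{m,n}}^2
\end{equation*}
is the tail of the convergent series $\norm{\mathcal{L}_{\theta}f}^2<\infty$ and thus tends to $0$ as $N\to\infty$. Therefore $(f_N,\mathcal{L}_{\theta}f_N)\to(f,\mathcal{L}_{\theta}f)$ in the graph norm, giving the density of $\mathrm{graph}(\mathcal{L}_{\theta}|\mathcal{D})$ in $\mathrm{graph}(\mathcal{L}_{\theta})$. I anticipate no serious obstacle overall: the membership in $L^4(\gamma)$ is immediate from Gaussian integrability, the graph density is a standard tail estimate, and the only substantive ingredient is the algebra-and-conjugation structure of $\mathcal{D}$ used for $\abs{\phi}^2$.
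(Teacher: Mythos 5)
Your proposal is correct, and for the inclusion (\ref{sp}) it rests on the same idea as the paper: identify $\mathcal{D}$ with the algebra of all polynomials in $z,\bar z$, after which membership in $L^4(\gamma)$ and the stability under $\phi\mapsto\phi\bar\phi$ are immediate. The paper establishes $\mathcal{D}=\mathrm{span}\set{z^m\bar z^n}$ by citing the explicit inversion formulas of \cite[Theorem~2.5, Corollary~2.8]{cl}, whereas you derive it from Proposition~\ref{2dim2} plus the triangularity of Hermite polynomials with respect to monomials; the two routes are equivalent, though your phrasing that $\set{H_k(x)H_{l-k}(y)}$ ``exhaust all polynomials of total degree $l$'' should read that the union of these families over degrees $\le l$ spans the polynomials of degree $\le l$ (for a single $l$ the span is only $(l+1)$-dimensional). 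A small point in your favor: the right-hand side of (\ref{sp}) is not a linear set (the condition $\abs{\phi}^2\in\mathcal{D}(\mathcal{L}_{\theta})$ is not linear), so the paper's check on the monomials $z^m\bar z^n$ alone tacitly needs exactly the algebra-and-conjugation observation you make explicit for a general $\phi\in\mathcal{P}$. Where you genuinely diverge is the graph density: the paper disposes of it in one line (``$\mathcal{D}$ is dense in $L^2(\gamma)$, hence dense in $\mathcal{D}(\mathcal{L}_{\theta})$, and $\mathcal{L}_{\theta}$ is closed''), which as written is a non sequitur, since $L^2$-density of a subspace of the domain of a closed operator does not by itself give density in graph norm. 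Your truncation argument --- $f_N=\sum_{m+n\le N}b_{m,n}J_{m,n}$, together with the identity $\abs{(m+n)\cos\theta+\mi(m-n)\sin\theta}^2=m^2+n^2+2mn\cos2\theta$ matching the weight in (\ref{dom}), so that $\norm{\mathcal{L}_{\theta}f-\mathcal{L}_{\theta}f_N}^2$ is a tail of a convergent series --- is the complete argument the paper's write-up implicitly relies on, and it does not even need closedness of $\mathcal{L}_{\theta}$.
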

\begin{proof}The equality \cite[Theorem 2.5]{cl}
\begin{equation*}
    J_{m,n}(z)=(m!n!2^{m+n})^{-\frac12}\sum_{r=0}^{m\wedge n}(-1)^r r!2^{r}{m \choose r}{n \choose r}z^{m-r}\bar{z}^{n-r},\quad \forall m,n\ge \Nnum
\end{equation*}
and the equality \cite[Corollary 2.8]{cl}
   \begin{equation*}
      z^m\bar{z}^n= \sum_{k=0}^{m\wedge n} {m \choose k}{n \choose k}k!\sqrt{(m-k)!(n-k)!2^{m+n}} J_{m-k,n-k}(z),\quad \forall m,n\ge \Nnum
   \end{equation*}
imply that $\mathcal{D}=\mathrm{span}\set{J_{m,n},\,m,n\ge 0}=\mathrm{span}\set{ z^m\bar{z}^n,\,m,n\ge 0}$.
But $f(z)=z^m\bar{z}^n$ belonging to the right hand side of (\ref{sp}) is trivial. Thus (\ref{sp}) holds.

Since $\mathcal{D}$ is a dense subset of $L^2(\gamma)$ (see Proposition~\ref{lm22}), $\mathcal{D}$ is dense in $ \mathcal{D}(\mathcal{L}_{\theta})$. Note that  $\mathcal{L}_{\theta}$ is a closed operator, we get that  $\mathrm{graph}(\mathcal{L}_{\theta}|\mathcal{D} ) $ is dense in  $\mathrm{graph}(\mathcal{L}_{\theta}) $.
\end{proof}

\noindent{\it Proof of Theorem~\ref{pop27}.\,}
First, it follows from Proposition~\ref{lm22} that (\ref{deri}) holds when $\phi\in \mathcal{D}=\mathrm{span}\set{J_{m,n},\,m,n\ge 0}$.

Second, suppose that $\phi\in L^2(\gamma)$ satisfies that the sequence $a_{m,n}=\innp{\phi,\,H_m(x)H_n(y)}$ is rapidly decreasing, then we will show that $\phi\in \mathcal{D}( \mathcal{L}_{\theta})$ and (\ref{deri}) is satisfied.
In fact, it follows from Proposition~\ref{tm410} that the Hermite expansion $\phi(z)=\sum_{m,n=0}^{\infty}a_{m,n} H_m(x)H_n(y)$ satisfies that
   \begin{equation*}
     \norm {x^{k_1} y^{k_2}\frac{\partial^{p_1+p_2}}{\partial x^{p_1}\partial y^{p_2}}(\phi-\phi_l)}_{L^2(\gamma)} \to 0 {\text{ as $l\to \infty$,\quad}} \forall k_1,k_2,\,p_1,p_2\in\Nnum,
   \end{equation*}
   where $\phi_l=\sum_{{m+n}\le l}a_{m,n} H_m(x)H_n(y)$.
Thus,
   \begin{equation*}
     \norm {z^{k_1} \bar{z}^{k_2}\frac{\partial^{p_1+p_2}}{\partial z^{p_1}\partial \bar{z}^{p_2}}(\phi-\phi_l)}_{L^2(\gamma)} \to 0 {\text{ as $l\to \infty$,\quad}} \forall k_1,k_2,\,p_1,p_2\in\Nnum.
   \end{equation*}
It follows from Proposition~\ref{2dim2} that $\sum_{{m+n}\le l}a_{m,n} H_m(x)H_n(y)=\sum_{{m+n}\le l}b_{m,n} J_{m,n}(z)$. Thus, as $l\to \infty$, we have that in $L^2(\gamma)$, $\phi_l\to \phi$ and
   \begin{align*}
      \mathcal{L}_{\theta} \phi_l &=-\sum_{{m+n}\le l}[(m+n)\cos \theta +\mi (m-n)\sin\theta] b_{m,n}J_{m,n}(z) \\
      &=[ 4\cos\theta \frac{\partial^2}{\partial z\partial \bar{z}}-e^{\mi\theta} z \frac{\partial}{\partial z}-e^{-\mi\theta}\bar{z} \frac{\partial}{\partial \bar{z}}]\phi_l \\
      & \to [ 4\cos\theta \frac{\partial^2}{\partial z\partial \bar{z}}-e^{\mi\theta} z \frac{\partial}{\partial z}-e^{-\mi\theta}\bar{z} \frac{\partial}{\partial \bar{z}}]\phi.
   \end{align*}
Since $ \mathcal{L}_{\theta}$ is closed, we have that $\phi\in \mathcal{D}( \mathcal{L}_{\theta})$ and (\ref{deri}) is satisfied.

Finally, it follows from Proposition~\ref{pp47} that if $\phi\in C^2_{\uparrow}(\Rnum^2)$ then there exists an approximation of identity $B_{\epsilon}\phi\in C_c^{\infty}(\Rnum^2)$ such that for all $p_1+p_2\le 2$ and $k_1,k_2\ge 0$, $x^{k_1}y^{k_2}\frac{\partial^{p_1+p_2}}{\partial x^{p_1}\partial y^{p_2}}( B_{\epsilon}\phi)\to x^{k_1}y^{k_2}\frac{\partial^{p_1+p_2}}{\partial x^{p_1}\partial y^{p_2}} \phi $ in $L^2(\gamma)$ as $\epsilon\to 0$. In addition, it follows from Proposition~\ref{tm57} that the sequence $\innp{B_{\epsilon}\phi,\, H_{m}(x)H_n(y)}$ is rapidly decreasing.
Thus, as $\epsilon\to 0$, we have that in $L^2(\gamma)$, $B_{\epsilon}\phi\to \phi$ and
\begin{align*}
  \mathcal{L}_{\theta} (B_{\epsilon}\phi) & =  [ 4\cos\theta \frac{\partial^2}{\partial z\partial \bar{z}}-e^{\mi\theta} z \frac{\partial}{\partial z}-e^{-\mi\theta}\bar{z} \frac{\partial}{\partial \bar{z}}] B_{\epsilon}\phi\\
   & \to [ 4\cos\theta \frac{\partial^2}{\partial z\partial \bar{z}}-e^{\mi\theta} z \frac{\partial}{\partial z}-e^{-\mi\theta}\bar{z} \frac{\partial}{\partial \bar{z}}] \phi.
\end{align*}
Since $ \mathcal{L}_{\theta}$ is closed, we have that $\phi\in \mathcal{D}( \mathcal{L}_{\theta})$ and (\ref{deri}) is satisfied.
 {\hfill\large{$\Box$}}
\\

\noindent{\it Proof of Theorem~\ref{exp_normal}.\,}
First, it follows from the density argument (see Proposition~\ref{pp47}) and Lebesgue's dominated convergence theorem that the Mehler formula (\ref{ousemi}) is still valid for $\varphi\in C^0_{\uparrow}(R^2)$, i.e.,
\begin{equation*}
T_t^2\varphi (x)= \int_{\Cnum}\,\varphi(e^{- (\cos \theta+\mi \sin \theta)t} x +\sqrt{1-e^{-2t\cos \theta}}y)\,\dif \gamma(y),\quad \forall \varphi\in C^0_{\uparrow}(R^2).
\end{equation*}
Then $T_t^2\varphi(x)=P_t\varphi(x)=E_x[\varphi(Z_t)]$ for each $\varphi\in C^0_{\uparrow}(\Rnum^2)$.

Second, using (\ref{cp}), it follows from Ito's lemma and Theorem~\ref{pop27} that for each $\varphi\in C^2_{\uparrow}(\Rnum^2)$,
\begin{align*}
  \varphi(Z_t)&=\varphi(x)+\int_{0}^t\frac{\partial}{\partial z}\varphi (Z_s) \,\dif Z_s+ \int_{0}^t\frac{\partial}{\partial \bar{z}}\varphi (Z_s) \,\dif \bar{Z}_s+  \int_0^t \frac{\partial^2}{\partial z\partial \bar{z}}(Z_s)\dif\innp{Z,\bar{Z} }_s\\
  &=\varphi(x)+\int_0^t \mathcal{L}_{\theta}\varphi (Z_s) \,\dif s- \sqrt{2\cos\theta}(\int_0^t \frac{\partial\varphi}{\partial z}(Z_s)\dif \zeta_s+ \int_0^t \frac{\partial\varphi}{\partial \bar{z}}(Z_s)\dif \bar{\zeta}_s).
\end{align*}
Then
\begin{align*}
  T_t^2\varphi(x) & =E_x[\varphi(Z_t)]=\varphi(x)+ E_x[\int_0^t \mathcal{L}_{\theta}\varphi (Z_s) \,\dif s] \\
    &=\varphi(x)+\int_0^t E_x[ \mathcal{L}_{\theta}\varphi (Z_s)] \,\dif s\quad (\text{by Fubini Theorem})\\
    &=\varphi(x)+\int_0^t T_s^2 \mathcal{L}_{\theta}\varphi (x) \,\dif s,
\end{align*}
 and
  \begin{align*}
    \mathcal{A}_2\varphi &=\lim_{t\downarrow 0}\frac{T_t^2\varphi -\varphi}{t}=\lim_{t\downarrow 0}\frac{1}{t} \int_0^t T_s^2 \mathcal{L}_{\theta}\varphi (x) \,\dif s \\
    &=\mathcal{L}_{\theta}\varphi \quad (\text{in} \quad L^2(\gamma)),
  \end{align*}where to get the last equality we use the continuity of $t \to T^2_t \varphi$ for any $\varphi\in L^2(\gamma)$ (see \cite[Corollary 2.3]{Pazy} or part (a) of \cite[Theorem 2.4]{Pazy}). Therefore, $ \mathcal{A}_2=\mathcal{L}_{\theta}$ on $C^2_{\uparrow}(\Rnum^2)$,

Third, since $\mathrm{graph}(\mathcal{L}_{\theta}|C^2_{\uparrow}(\Rnum^2)) $ is dense in  $\mathrm{graph}(\mathcal{L}_{\theta}) $ (see Proposition~\ref{pp26}) and $\mathcal{A}_2$ is closed, we have that $\mathcal{L}_{\theta}\subseteq \mathcal{A}_2$.
It follows from Proposition~\ref{pop25} and Theorem~\ref{lm212} that both $\mathcal{L}_{\theta}$ and $\mathcal{A}_2$ are normal operators. Since $\mathcal{L}_{\theta}$ is maximally normal (see \cite[Theorem~13.32]{ru}),  we have that $ \mathcal{A}_2=\mathcal{L}_{\theta}$. {\hfill\large{$\Box$}}

\begin{cor}\label{cor4_8}
   For any $\theta\in(-\frac{\pi}{2},\,\frac{\pi}{2})$, $\mathcal{L}_{\theta}\subseteq\mathcal{A}_1$ (i.e., $\mathcal{A}_1$ is an extension of $  \mathcal{L}_{\theta}$ to $L^1(\gamma)$) and
\begin{equation}\label{ddom}
     \mathcal{D}(\mathcal{L}_{\theta})=\set{\phi\in D(\mathcal{A}_1)\cap L^2(\gamma):\,\mathcal{A}_1\phi\in L^2(\gamma) }.
\end{equation}
\end{cor}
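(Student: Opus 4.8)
The plan is to deduce everything from the identity $\mathcal{A}_2=\mathcal{L}_{\theta}$ of Theorem~\ref{exp_normal}, the consistency of the family $\set{T_t^p}_{t\ge 0}$, and a short resolvent argument. The basic structural fact I would use throughout is that $\gamma$ is a probability measure, so $L^2(\gamma)\subseteq L^1(\gamma)$ with $\norm{\cdot}_{L^1(\gamma)}\le\norm{\cdot}_{L^2(\gamma)}$; hence $L^2$-convergence automatically implies $L^1$-convergence, which is what will let me move between the two generators.

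First I would record the semigroup consistency $T_t^1\phi=T_t^2\phi$ for every $\phi\in L^2(\gamma)$. Both $T_t^1$ and $T_t^2$ are, by construction, the closures in the respective norms of the one operator $P_t$ on $C_b(\Rnum^2)$. Choosing $\phi_n\in C_b(\Rnum^2)$ with $\phi_n\to\phi$ in $L^2(\gamma)$ (and therefore in $L^1(\gamma)$), one has $T_t^2\phi_n=P_t\phi_n=T_t^1\phi_n$; letting $n\to\infty$ in $L^1(\gamma)$ gives $T_t^2\phi=T_t^1\phi$ $\gamma$-a.e.

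The inclusion $\mathcal{L}_{\theta}\subseteq\mathcal{A}_1$ and the ``$\subseteq$'' half of (\ref{ddom}) will then be immediate. If $\phi\in\mathcal{D}(\mathcal{L}_{\theta})=D(\mathcal{A}_2)$, then $\frac{1}{t}(T_t^2\phi-\phi)\to\mathcal{A}_2\phi=\mathcal{L}_{\theta}\phi$ in $L^2(\gamma)$ as $t\downarrow 0$. Using $T_t^2\phi=T_t^1\phi$ and the embedding $L^2(\gamma)\hookrightarrow L^1(\gamma)$, the same difference quotient converges in $L^1(\gamma)$, so $\phi\in D(\mathcal{A}_1)$ with $\mathcal{A}_1\phi=\mathcal{L}_{\theta}\phi\in L^2(\gamma)$. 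This proves $\mathcal{L}_{\theta}\subseteq\mathcal{A}_1$ and places every $\phi\in\mathcal{D}(\mathcal{L}_{\theta})$ in the right-hand set of (\ref{ddom}).

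The reverse inclusion is the crux, and I would handle it via resolvents. Fixing $\lambda>0$, recall that for the generator of a contraction semigroup the resolvent $(\lambda-\mathcal{A}_p)^{-1}=\int_0^{\infty}e^{-\lambda t}T_t^p\,\dif t$ is a bounded bijection of $L^p(\gamma)$ onto $D(\mathcal{A}_p)$. The a.e. identity $T_t^1\psi=T_t^2\psi$ for $\psi\in L^2(\gamma)$, together with $L^2(\gamma)\hookrightarrow L^1(\gamma)$, shows that the $L^2$-valued and $L^1$-valued Bochner integrals defining the two resolvents have the same integrand, whence $(\lambda-\mathcal{A}_2)^{-1}\psi=(\lambda-\mathcal{A}_1)^{-1}\psi$ for all $\psi\in L^2(\gamma)$. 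Now if $\phi\in D(\mathcal{A}_1)\cap L^2(\gamma)$ with $\mathcal{A}_1\phi\in L^2(\gamma)$, set $\psi:=\lambda\phi-\mathcal{A}_1\phi\in L^2(\gamma)$; then $\phi=(\lambda-\mathcal{A}_1)^{-1}\psi=(\lambda-\mathcal{A}_2)^{-1}\psi\in D(\mathcal{A}_2)=\mathcal{D}(\mathcal{L}_{\theta})$, giving ``$\supseteq$''. The one point demanding care is this resolvent-consistency step: one must justify interchanging the two Bochner integrals, and that rests precisely on the a.e. equality $T_t^1\psi=T_t^2\psi$ and the continuity of the inclusion $L^2(\gamma)\hookrightarrow L^1(\gamma)$.
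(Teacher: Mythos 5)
Your proof is correct, and its first half --- the consistency $T_t^1\phi=T_t^2\phi$ for $\phi\in L^2(\gamma)$ and the inclusion $\mathcal{D}(\mathcal{L}_{\theta})=D(\mathcal{A}_2)\subseteq\{\phi\in D(\mathcal{A}_1)\cap L^2(\gamma):\,\mathcal{A}_1\phi\in L^2(\gamma)\}$ via the embedding $L^2(\gamma)\hookrightarrow L^1(\gamma)$ --- is exactly what the paper does, except that the paper dismisses this direction as ``trivial'' while you spell it out. The genuine difference is in the reverse inclusion. The paper stays at the level of difference quotients: for $\phi\in D(\mathcal{A}_1)\cap L^2(\gamma)$ with $\mathcal{A}_1\phi\in L^2(\gamma)$ it writes
\begin{equation*}
\frac{T_t^2\phi-\phi}{t}=\frac{T_t^1\phi-\phi}{t}=\frac{1}{t}\int_0^t T_s^1\mathcal{A}_1\phi\,\dif s=\frac{1}{t}\int_0^t T_s^2\mathcal{A}_1\phi\,\dif s,
\end{equation*}
and concludes from the strong continuity of $t\mapsto T_t^2$ in $L^2(\gamma)$ that this Ces\`{a}ro average converges to $\mathcal{A}_1\phi$ in $L^2(\gamma)$, so $\phi\in D(\mathcal{A}_2)=\mathcal{D}(\mathcal{L}_{\theta})$. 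You instead pass to resolvents: consistency of the Laplace transforms gives $(\lambda-\mathcal{A}_1)^{-1}\psi=(\lambda-\mathcal{A}_2)^{-1}\psi$ for $\psi\in L^2(\gamma)$, and then $\phi=(\lambda-\mathcal{A}_1)^{-1}(\lambda\phi-\mathcal{A}_1\phi)\in D(\mathcal{A}_2)$. Both arguments rest on the same underlying fact, $T_t^1|_{L^2(\gamma)}=T_t^2$, which you justify by density and the paper uses silently. The paper's route is the more elementary one: it needs only the fundamental identity $T_t^1\phi-\phi=\int_0^t T_s^1\mathcal{A}_1\phi\,\dif s$ and strong continuity. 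Yours additionally invokes the representation $(\lambda-\mathcal{A}_p)^{-1}=\int_0^{\infty}e^{-\lambda t}T_t^p\,\dif t$ for generators of contraction semigroups and the fact that the bounded embedding commutes with Bochner integrals; in exchange, the resolvent argument is the standard, robust pattern for identifying domains of generators of consistent semigroups on different $L^p$ spaces, and it transfers unchanged to settings where manipulating difference quotients is less convenient.
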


\begin{proof} The proof is similar to the real case \cite[p19]{bd}. In detail,
 $ \mathcal{D}(\mathcal{L}_{\theta})= \mathcal{D}(\mathcal{A}_2)\subset \set{\phi\in D(\mathcal{A}_1)\cap L^2(\gamma):\,\mathcal{A}_1\phi\in L^2(\gamma) } $ is trivial. Now suppose that $\phi\in D(\mathcal{A}_1)\cap L^2(\gamma)$ and $\mathcal{A}_1\phi\in L^2(\gamma)$, then as $t\to 0$,
\begin{align*}
 \frac{ T^2_t\phi-\phi }{t} &= \frac{T^1_t\phi-\phi}{t} \\
&=\frac{1}{t}\int_0^{t}T^1_s\mathcal{A}_1 \phi\dif s\quad (\text{by the semigroup equation})\\
 &=\frac{1}{t}\int_0^{t}T^2_s\mathcal{A}_1 \phi\dif s\to \mathcal{A}_1 \phi \quad (\text{in} \quad L^2(\gamma)),
\end{align*}
where  to get the last equality we use again the continuity of $t \to T^2_t \varphi$. Thus $\phi\in  \mathcal{D}(\mathcal{A}_2)= \mathcal{D}(\mathcal{L}_{\theta})$ and (\ref{ddom}) holds.
\end{proof}
\noindent{\it Proof of 4),5) of Theorem~\ref{main_thm}.\,}
 Since $F\in C^2_{\uparrow}(\Cnum^{n})$ and $\vec{\phi}=(\phi_1,\dots,\phi_n)\in \mathcal{D}^n$, then $F\circ\vec{\phi}\in C^2_{\uparrow}(\Rnum^2)$.
   By the complex version of the chain rule \cite[p27]{st}, it follows from Theorem~\ref{pop27} that
  \begin{align}
        \mathcal{L}_{\theta}(F\circ\vec{\phi})&=   [ 4\cos\theta \frac{\partial^2}{\partial z\partial \bar{z}}-e^{\mi\theta} z \frac{\partial}{\partial z}-e^{-\mi\theta}\bar{z} \frac{\partial}{\partial \bar{z}}] (F\circ\vec{\phi})\nonumber\\
      &= -e^{\mi\theta} z \sum_{i=1}^n \frac{\partial}{\partial z} \phi_i\frac{\partial F}{\partial z_i}\circ\vec{\phi}+ \frac{\partial}{\partial z} \bar{\phi}_i\frac{\partial F}{\partial \bar{z}_i}\circ\vec{\phi}  \nonumber\\
      &-e^{-\mi\theta}\bar{z} \sum_{i=1}^n \frac{\partial}{\partial \bar{z}} \phi_i\frac{\partial F}{\partial z_i}\circ\vec{\phi}+ \frac{\partial}{\partial\bar{ z}} \bar{\phi}_i \frac{\partial F}{\partial \bar{z}_i}\circ\vec{\phi} \nonumber\\
      &+ 4\cos\theta  \sum_{i=1}^n  \frac{\partial^2 \phi_i}{\partial  z\partial \bar{z}}\frac{\partial F}{\partial z_i}\circ\vec{\phi}+ \frac{\partial^2 \bar{\phi}_i }{ \partial z\partial\bar{ z}}\frac{\partial F}{\partial \bar{z}_i}\circ\vec{\phi}\nonumber\\
       &+ 4\cos\theta  \sum_{i,j=1}^n\frac{\partial}{\partial z} \phi_i( \frac{\partial}{\partial \bar{z}} \phi_j\frac{\partial^2 F}{\partial z_i\partial z_j}\circ\vec{\phi}+ \frac{\partial}{\partial\bar{ z}} \bar{\phi}_j \frac{\partial^2 F}{\partial {z}_i\partial \bar{z}_j}\circ\vec{\phi})\nonumber\\
       &+ 4\cos\theta  \sum_{i,j=1}^n\frac{\partial}{\partial z}\bar{ \phi}_i (\frac{\partial}{\partial \bar{z}} \phi_j\frac{\partial^2 F}{\partial \bar{z}_i\partial z_j}\circ\vec{\phi}+\frac{\partial}{\partial\bar{ z}} \bar{\phi}_j \frac{\partial^2 F}{\partial \bar{z}_i\partial \bar{z}_j}\circ\vec{\phi})\nonumber\\
       &=4\cos\theta  \sum_{i,j=1}^n\frac{\partial}{\partial z} \phi_i \frac{\partial}{\partial \bar{z}} \phi_j\frac{\partial^2 F}{\partial z_i\partial z_j}\circ\vec{\phi}+ \frac{\partial}{\partial  z } \bar{\phi}_i\frac{\partial}{\partial\bar{ z}} \bar{\phi}_j \frac{\partial^2 F}{\partial \bar{z}_i\partial \bar{z}_j}\circ\vec{\phi}\nonumber\\
       &+4\cos\theta  \sum_{i,j=1}^n(\frac{\partial}{\partial z} \phi_i \frac{\partial}{\partial \bar{z}} \bar{\phi}_j + \frac{\partial}{\partial\bar{ z}} \phi_i \frac{\partial}{\partial z} \bar{\phi}_j ) \frac{\partial^2 F}{\partial z_i\partial \bar{z}_j}\circ\vec{\phi}\nonumber\\
       &+\sum_{i=1}^n \mathcal{L}_{\theta}\phi_i\frac{\partial F}{\partial z_i}\circ\vec{\phi}+ \mathcal{L}_{\theta}\bar{\phi}_i\frac{\partial F}{\partial \bar{z}_i}\circ\vec{\phi}.\label{ltheta2}
   \end{align}
   Taking $F(z_1,z_2)=z_1 \overline{z_2}$ in the above equation, we have that
\begin{align}
\bilf{\phi,\,\psi}&=\frac{1}{ 2\cos\theta  }[\mathcal{L}_{\theta}(\phi\bar{\psi })-\bar{\psi }\mathcal{L}_{\theta}(\phi)- \phi\mathcal{L}_{\theta}(\bar{\psi })
]\nonumber\\
&=2[\frac{\partial \phi}{\partial z} \frac{\partial \bar{\psi}}{\partial \bar{z}} + \frac{\partial \phi}{\partial\bar{ z}} \frac{\partial\bar{\psi}}{\partial z}] =2[\frac{\partial \phi}{\partial z}\overline{ \frac{\partial{\psi}}{\partial{z}}} + \frac{\partial \phi}{\partial\bar{ z}} \overline{\frac{\partial\psi}{\partial \bar{z}}}] .\label{bilinear}
\end{align}
Clearly, $\bilf{\phi,\,\psi}$ is a non-negative definite bilinear form on the field $\Cnum$.  Substituting (\ref{bilinear}) into (\ref{ltheta2}), we show (\ref{ltheta}).
{\hfill\large{$\Box$}}
\\

\noindent{\it Proof of Theorem~\ref{main_thm}.\,} By Proposition~\ref{cld1}, the operator $\mathcal{L}_{\theta}$ is closed. 4)-5) of Theorem~\ref{main_thm} have been shown before.
 1)-3) and 6) of Theorem~\ref{main_thm} are shown in Proposition~\ref{pop25}-\ref{pp26} and Corollary~\ref{cor4_8} respectively.
{\hfill\large{$\Box$}}
\section{Appendix}\label{app}
To be self-contained, we list the necessary results of functions slowly
increasing at infinity. Some results which can not be found in textbooks will be shown shortly here.
In this section all functions will be complex-valued and defined on $\Rnum^n$.
\begin{dfn}\label{dfn5_1}
Denote by $ C^{\infty}_{c}(\Rnum^n)$ the space of smooth and compactly supported functions on $\Rnum^n$ \cite[p5]{bn}). Denote by $S(\Rnum^n) $ the space of $C^{\infty}$ functions rapidly decreasing at infinity \cite[p105]{bn}.
We say that a continuous function $f(x)$ is slowly increasing at infinity if there exists an integer $k$ such that
$(1+r^2)^{-\frac{k}{2}}f(x)$ is bounded in $\Rnum^n$ with $r=\abs{x}$ \cite[p110]{bn}. Denote by $C^m_{\uparrow}(\Rnum^n)$ the space of all
functions having slowly increasing at infinity continuous partial derivatives of
order $\le m$.
\end{dfn}

\begin{nott} Denote by $\gamma$ the n-dimensional standard Gaussian measure:
   \begin{equation*}
     \dif \gamma(x)=(2\pi)^{-\frac{n}{2}}\exp\set{-\frac{\abs{x}^2}{2}}\dif x,\quad x\in \Rnum^n.
   \end{equation*}
   Denote the density function by $\rho(x)=\frac{\dif \gamma(x)}{\dif x}$.
\end{nott}

\subsection{ Approximation of identity of $C^m_{\uparrow}(\Rnum^n)$ in $L^q(\gamma)$ }
\begin{nott} Set
 \begin{equation}
 \phi(x) =e^{-\frac{1}{1-x^2}} \mathbf{1}_{\{|x|<1\}},\quad x\in \Rnum^n,
 \end{equation} where $\abs{x}=\sqrt{x_1^2+\cdots+x_n^2}$ and $\mathbf{1}_B$ the characteristic function of set $B$.
Divide this function by its integral over the whole space to get a function $\alpha(x)$ of integral one which is called a mollifier. Next, for every $\epsilon >0$, define \cite[p5]{bn}
\begin{equation}
   \alpha_{\epsilon}(x)=\frac{1}{\epsilon^n}\alpha(\frac{x}{\epsilon}).
\end{equation}
Let $L^1_{loc}(\Rnum^n)$ be the space of locally integrable function on $\Rnum^n$. If $u\in L^1_{loc}(\Rnum^n)$, the function
   \begin{equation}\label{conv}
     u_{\epsilon}(x)=\int_{\Rnum^n} u(x-y)\alpha_{\epsilon}(y)\dif y=\int_{\Rnum^n}\alpha_{\epsilon}(x-y) u(y)\dif y
   \end{equation}
   is said to be the convolution of $u$ and $\alpha_{\epsilon}$ \cite[Definition 1.4]{bn}. It is also denoted by the convolution operator $A_{\epsilon}u=(u\ast\alpha_{\epsilon})(x) $.
\end{nott}


\begin{lem}\label{ll}
Suppose that $f(x)\in C^0_{\uparrow}(\Rnum^n)$. Then $A_{\epsilon}f\in  C^{\infty}_{\uparrow}(\Rnum^n)$ (the space of $C^{\infty}$ functions slowly increasing at infinity) and for any $q\ge 1$ and any $k\in \Nnum^n$, $\lim\limits_{\epsilon \to 0} {x}^k A_{\epsilon}f ={x}^k f$ in $L^q(\gamma)$.
\end{lem}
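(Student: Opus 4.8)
The plan is to treat the three assertions separately, since each is a classical mollifier estimate that only needs to be adapted to the Gaussian weight $\gamma$. First I would establish smoothness. Because $\alpha_\epsilon\in C^\infty_c(\Rnum^n)$ and $f\in L^1_{loc}(\Rnum^n)$, the convolution $A_\epsilon f=f\ast\alpha_\epsilon$ admits differentiation under the integral sign, giving $\partial^\beta A_\epsilon f=f\ast\partial^\beta\alpha_\epsilon$ for every multi-index $\beta$. This is the standard fact that convolution with a test function is infinitely differentiable, and it already yields $A_\epsilon f\in C^\infty(\Rnum^n)$.

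Next I would verify the slow growth. Since $f\in C^0_{\uparrow}(\Rnum^n)$, fix an integer $k_0\ge 0$ and a constant $C$ with $\abs{f(x)}\le C(1+\abs{x}^2)^{k_0/2}$. Using that $\alpha_\epsilon$ is supported in $\set{\abs{y}<\epsilon}$, is nonnegative, and has integral one, the elementary bound $1+\abs{x-y}^2\le 3(1+\abs{x}^2)$ valid for $\abs{y}<\epsilon\le 1$ gives
$$
\abs{A_\epsilon f(x)}\le \int_{\abs{y}<\epsilon}\abs{f(x-y)}\alpha_\epsilon(y)\,\dif y\le 3^{k_0/2}C(1+\abs{x}^2)^{k_0/2}.
$$
Applying the same reasoning to $\partial^\beta A_\epsilon f=f\ast\partial^\beta\alpha_\epsilon$, where $\partial^\beta\alpha_\epsilon$ is again smooth and compactly supported, shows that every derivative is slowly increasing (for each fixed $\epsilon$), so $A_\epsilon f\in C^\infty_{\uparrow}(\Rnum^n)$. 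The decisive feature for what follows is that the bound on $A_\epsilon f$ itself preserves the growth order $k_0$ with a constant that is \emph{uniform} in $\epsilon\in(0,1]$, since there $\norm{\alpha_\epsilon}_{L^1}=1$.

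For the convergence I would combine pointwise convergence with dominated convergence against $\gamma$. As $f$ is continuous, the identity
$$
A_\epsilon f(x)-f(x)=\int_{\abs{y}<\epsilon}\big(f(x-y)-f(x)\big)\alpha_\epsilon(y)\,\dif y
$$
together with the continuity of $f$ at $x$ yields $A_\epsilon f(x)\to f(x)$ for every $x$, hence $x^k A_\epsilon f(x)\to x^k f(x)$ pointwise. The uniform estimate of the previous step gives, for all $\epsilon\le 1$,
$$
\abs{x^k\big(A_\epsilon f(x)-f(x)\big)}^q\le \abs{x^k}^q\big(3^{k_0/2}C+C\big)^q(1+\abs{x}^2)^{qk_0/2},
$$
and the right-hand side is a polynomial, hence lies in $L^1(\gamma)$ because the standard Gaussian measure has finite moments of every order. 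Lebesgue's dominated convergence theorem then delivers $\norm{x^k(A_\epsilon f-f)}_{L^q(\gamma)}\to 0$.

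The one genuinely delicate point is producing a dominating function that is simultaneously independent of $\epsilon$ and $\gamma$-integrable; this is precisely what the uniform slow-growth estimate of the second step supplies, after which the finiteness of all Gaussian moments closes the argument. Everything else is the routine machinery of mollifiers, so I expect no further obstacle.
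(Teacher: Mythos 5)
Your proof is correct, and it takes a genuinely different route from the paper's at both stages. For smoothness and slow increase, the paper does not differentiate under the integral sign by hand: it observes that $\alpha_{\epsilon}\in C^{\infty}_{c}(\Rnum^n)\subset S(\Rnum^n)$ and $f\in C^0_{\uparrow}(\Rnum^n)\subset S'(\Rnum^n)$, and invokes a distribution-theory result (Theorem~4.9 of \cite{bn}) to conclude $A_{\epsilon}f\in C^{\infty}_{\uparrow}(\Rnum^n)$; your direct computation via $\partial^{\beta}A_{\epsilon}f=f\ast\partial^{\beta}\alpha_{\epsilon}$ and the bound $1+\abs{x-y}^2\le 3(1+\abs{x}^2)$ is more elementary and, crucially, produces a growth estimate whose constant is uniform in $\epsilon\le 1$. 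For the $L^q(\gamma)$ convergence, the paper truncates to a ball $\mathrm{B}_a$, uses uniform convergence $A_{\epsilon}f\to f$ on compact sets (\cite[Theorem~1.1]{bn}), and absorbs the complement of the ball into a $\sigma/2$ error; you instead combine pointwise convergence with Lebesgue's dominated convergence theorem, the dominating function being the fixed polynomial supplied by your uniform estimate. Your route buys something concrete here: the paper's claim that a single ball $\mathrm{B}_a$ works \emph{for all} $\epsilon$ simultaneously is exactly the step it leaves implicit, and justifying it requires precisely the $\epsilon$-uniform polynomial domination of $A_{\epsilon}f-f$ that you prove explicitly (otherwise $\mathrm{B}_a$ could a priori depend on $\epsilon$ and the splitting argument would not close). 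So your proof is self-contained, avoids the citations to distribution theory and to uniform convergence of mollifications, and makes rigorous the one point where the paper's write-up is glossed; what the paper's approach buys in exchange is brevity and a statement ($A_{\epsilon}f\in C^{\infty}_{\uparrow}$) obtained wholesale from standard references rather than re-proved.
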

\begin{proof}
 First, for any $\epsilon>0$, since $\alpha_{\epsilon}\in C^{\infty}_{c}(\Rnum^n)\subset S(\Rnum^n) $ and $f(x)\in C^0_{\uparrow}(\Rnum^n)\subset S'(\Rnum^n)$ (tempered
distributions, see Example 4 in \cite[110]{bn}), it follows from Theorem~4.9 of \cite[p133]{bn} that $A_{\epsilon}f=f\ast \alpha_{\epsilon} \in  C^{\infty}_{\uparrow}(\Rnum^n)$.

Second, since any polynomial $P(x),\,x\in \Rnum^n,$ is in $L^q(\gamma)$, we have $f,\,A_{\epsilon}f\in L^q(\gamma)$. Thus,
 $ \norm{A_{\epsilon}f-f }^q_q =\lim_{n\to\infty}\int_{\mathrm{B}_a}\abs{A_{\epsilon}f-f}^q\,\dif \gamma(x)$ where $\mathrm{B}_a=\set{x\in\Rnum^n,\,\abs{x}\le a}$.

Finally, given $\sigma>0$, there exists $\mathrm{B}_a$ such that
\begin{equation}\label{aa}
\norm{A_{\epsilon}f-f }^q_q\le \int_{\mathrm{B}_a}\abs{A_{\epsilon}f-f}^q\,\dif \gamma(x)+\frac{\sigma}{2}.
\end{equation}
 Note that
\begin{eqnarray}\label{bb}
          \int_{\mathrm{B}_a}\abs{A_{\epsilon}f-f}^q\,\dif \gamma(x) \le \sup_{x\in \mathrm{B}_a}\abs{A_{\epsilon}f-f}^q\gamma(K_n) \le  \sup_{x\in \mathrm{B}_a}\abs{A_{\epsilon}f-f}^q.
\end{eqnarray}
It follows from \cite[Theorem 1.1]{bn} that $A_{\epsilon}f\to f$ uniformly on $\mathrm{B}_a$ as $\epsilon\to 0$. Thus there exists $\epsilon_0>0$ such that $\sup_{x\in \mathrm{B}_a}\abs{A_{\epsilon}f-f}\le (\frac{\sigma}{2})^{\frac{1}{q}}$ for any $0<\epsilon<\epsilon_0 $. Together with (\ref{aa}) and (\ref{bb}), we have that $\norm{A_{\epsilon}f-f }^q_q\le \sigma$, which proves that $A_{\epsilon}f \to f$ in $L^q(\gamma)$, as $\epsilon \to 0$.

Similar to the above proof, it follows that for any $k\in \Nnum^n$, $\lim\limits_{\epsilon \to 0} {x}^k A_{\epsilon}f ={x}^k f$ in $L^q(\gamma)$.
\end{proof}

\begin{cor}\label{cc1}
   Suppose that $f(x)\in C^m_{\uparrow}(\Rnum^n)$. Then for any $p,k\in \Nnum^n$ such that $\abs{p}\le m$, ${x}^k\partial^p( A_{\epsilon}f)\to {x}^k\partial^p f $ in $L^q(\gamma)$, as $\epsilon \to 0$.
\end{cor}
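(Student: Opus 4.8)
The plan is to deduce this directly from Lemma~\ref{ll} by showing that the mollification operator $A_\epsilon$ commutes with the partial derivative $\partial^p$ whenever $\abs{p}\le m$. The key identity to establish first is
\begin{equation*}
  \partial^p(A_\epsilon f)=A_\epsilon(\partial^p f),\qquad \abs{p}\le m.
\end{equation*}
To prove it I would start from the representation $A_\epsilon f(x)=\int_{\Rnum^n} f(x-y)\alpha_\epsilon(y)\dif y$ in (\ref{conv}) and differentiate under the integral sign. Since $f\in C^m_{\uparrow}(\Rnum^n)$, each $\partial^p f$ with $\abs{p}\le m$ exists and is continuous, hence bounded on compact sets; because $\alpha_\epsilon\in C^\infty_{c}(\Rnum^n)$ has support in $\set{\abs{y}\le\epsilon}$, for $x$ ranging over any fixed compact set the integrand $f(x-y)\alpha_\epsilon(y)$ together with its $x$-derivatives up to order $m$ is dominated by an integrable function, so the interchange of $\partial^p$ and the integral is legitimate. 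This yields $\partial^p(A_\epsilon f)(x)=\int_{\Rnum^n}(\partial^p f)(x-y)\alpha_\epsilon(y)\dif y=A_\epsilon(\partial^p f)(x)$.

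Next I would observe that, by the very definition of $C^m_{\uparrow}(\Rnum^n)$, the function $g:=\partial^p f$ belongs to $C^0_{\uparrow}(\Rnum^n)$ for every $p$ with $\abs{p}\le m$. Hence Lemma~\ref{ll} applies to $g$: for every $q\ge1$ and every $k\in\Nnum^n$ one has $x^k A_\epsilon(\partial^p f)\to x^k\,\partial^p f$ in $L^q(\gamma)$ as $\epsilon\to0$. Combining this with the commutation identity gives
\begin{equation*}
  x^k\partial^p(A_\epsilon f)=x^k A_\epsilon(\partial^p f)\longrightarrow x^k\,\partial^p f\quad\text{in }L^q(\gamma),
\end{equation*}
which is exactly the claim.

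The only genuinely nontrivial point is the commutation identity in the first paragraph, namely the justification that $\partial^p$ passes inside the convolution. This is entirely standard once one uses the compact support of $\alpha_\epsilon$ to localize the $y$-integration and the continuity of $\partial^p f$ to dominate the integrand on compacta; no growth control at infinity is needed at this stage, since the $L^q(\gamma)$ convergence is already supplied by Lemma~\ref{ll} applied to $\partial^p f\in C^0_{\uparrow}(\Rnum^n)$. Everything else is a routine substitution, so the corollary follows immediately.
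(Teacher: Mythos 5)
Your proposal is correct and follows essentially the same route as the paper: reduce to Lemma~\ref{ll} applied to $\partial^p f\in C^0_{\uparrow}(\Rnum^n)$, and combine with the commutation identity $\partial^p(A_{\epsilon}f)=A_{\epsilon}(\partial^p f)$. The only cosmetic difference is that the paper quotes this identity as the standard mollification fact for $u,\partial^p u\in L^1_{loc}(\Rnum^n)$, whereas you justify it directly by differentiation under the integral sign, which is equally valid here since $f\in C^m_{\uparrow}(\Rnum^n)$ makes the classical and weak derivatives coincide.
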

\begin{proof}
First, if $f(x)\in C^m_{\uparrow}(\Rnum^n)$ then $\partial^p f \in C^0_{\uparrow}(\Rnum^n)$ for any $p\in \Nnum^n$ such that $\abs{p}\le m$. It follows from Lemma~\ref{ll} that $ x^kA_{\epsilon} ( \partial^p f) \to x^k\partial^p f$ in $L^q(\gamma)$ for any $k\in \Nnum^n$.

Second, for any $p\in \Nnum^n$, if $u,\, \partial^p u\in L^1_{loc}(\Rnum^n)$ then $
    \partial^p( A_{\epsilon}u)= A_{\epsilon} ( \partial^p u)$.

Finally, since $C^0_{\uparrow}(\Rnum^n)\subset L^1_{loc}(\Rnum^n) $, we have that ${x}^k\partial^p( A_{\epsilon}f)\to {x}^k\partial^p f $ in $L^q(\gamma)$.
\end{proof}

\begin{nott}\label{ntt1}
   Let $a\in\Rnum^{+}$ and denote by $\mathrm{B}_{a+1}$ and $\mathrm{B}_{a}$ concentric balls of radius $a+1$ and $a$, respectively. It follows from Corollary~3 of \cite[p9]{bn} that there exists a so-called (smooth) cutoff function  $\beta_a(x)\in C^{\infty}_{c}(\Rnum^n)$ such that: (i) $0\le \beta_a\le 1$ and $supp{\beta_a}\subset \mathrm{B}_{a+1}$, (ii) $\beta_a(x)=1$ on $\mathrm{B}_{a} $, (iii) for all $p\in \Nnum^n$, $\sup_{x\in \Rnum}\abs{\partial^p \beta_a}\le c(n,p)$.
\end{nott}
\begin{lem}\label{ll22}
Let the cutoff function $\beta_a$ prevail. Suppose that $g\in C^{\infty}_{\uparrow}(\Rnum^n)$ and set $g_a= g\beta_a$. Then $g_a\in C^{\infty}_c(\Rnum^n)$, and for any $k,p\in \Nnum^n$, $\lim\limits_{a \to \infty} {x}^k \partial^p g_a ={x}^k \partial^p g$ in $L^q(\gamma)$ for any $q\ge 1$.
\end{lem}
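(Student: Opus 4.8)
The plan is to dispatch the structural claim immediately and then reduce the $L^q$-convergence to a Gaussian tail estimate via the Leibniz rule. First, $g_a=g\beta_a\in C^\infty_c(\Rnum^n)$ is immediate: $g$ is smooth and $\beta_a\in C^\infty_c(\Rnum^n)$ by Notation~\ref{ntt1}, so the product is smooth with $\mathrm{supp}(g_a)\subset\mathrm{supp}(\beta_a)\subset\mathrm{B}_{a+1}$. For the convergence, I would expand by the multi-index Leibniz rule,
\[
\partial^p g_a=\sum_{\ell\le p}\binom{p}{\ell}\partial^{\ell}g\cdot\partial^{p-\ell}\beta_a,
\]
and isolate the top term $\ell=p$, which contributes $\beta_a\,\partial^p g$. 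This yields
\[
x^k\partial^p g_a-x^k\partial^p g=x^k\partial^p g\,(\beta_a-1)+\sum_{\ell<p}\binom{p}{\ell}\,x^k\partial^{\ell}g\cdot\partial^{p-\ell}\beta_a,
\]
so it suffices to show that each of the finitely many pieces tends to $0$ in $L^q(\gamma)$.

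The two facts that make this work are the following. Since $g\in C^\infty_{\uparrow}(\Rnum^n)$, every derivative $\partial^{\ell}g$ is slowly increasing, hence each $x^k\partial^{\ell}g$ has polynomial growth and therefore lies in $L^q(\gamma)$ (polynomials are $\gamma$-integrable to any power, as already used in the proof of Lemma~\ref{ll}). And by property~(ii) of Notation~\ref{ntt1}, $\beta_a\equiv1$ on $\mathrm{B}_a$, so both $\beta_a-1$ and every $\partial^{p-\ell}\beta_a$ with $\ell\neq p$ vanish on $\mathrm{B}_a$; together with property~(i) their supports lie in $\mathrm{B}_a^{c}$. Consequently, for the first piece $0\le\beta_a\le1$ gives $\abs{x^k\partial^p g\,(\beta_a-1)}\le\abs{x^k\partial^p g}\,\mathbf{1}_{\mathrm{B}_a^{c}}$, whose $L^q(\gamma)$-norm is bounded by the tail $\int_{\mathrm{B}_a^{c}}\abs{x^k\partial^p g}^q\,\dif\gamma$, which tends to $0$ as $a\to\infty$ because $x^k\partial^p g\in L^q(\gamma)$. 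For each error term, property~(iii) furnishes the uniform-in-$a$ bound $\abs{\partial^{p-\ell}\beta_a}\le c(n,p-\ell)$, so $\abs{x^k\partial^{\ell}g\cdot\partial^{p-\ell}\beta_a}^q\le c(n,p-\ell)^q\abs{x^k\partial^{\ell}g}^q\mathbf{1}_{\mathrm{B}_a^{c}}$, whose integral is again the tail of an $L^q(\gamma)$ function and hence vanishes as $a\to\infty$. Summing the finitely many terms finishes the argument.

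The one point genuinely requiring care — and the reason property~(iii) is stated with a constant $c(n,p)$ independent of $a$ — is the control of the error terms. A priori $\partial^{p-\ell}\beta_a$ could grow with $a$, which would wreck the estimate; the uniform derivative bound, combined with the fact that these terms are confined to the annular region $\mathrm{B}_a^{c}$ on which the Gaussian tail of the $L^q(\gamma)$ function $x^k\partial^{\ell}g$ is already small, is precisely what forces them to $0$. The top term, by contrast, is a routine dominated-convergence tail estimate.
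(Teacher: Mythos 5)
Your proof is correct and follows essentially the same route as the paper's: a Leibniz expansion, the observation that $\beta_a-1$ and all derivatives $\partial^{p-\ell}\beta_a$ (for $\ell\neq p$) are supported in the complement of $\mathrm{B}_a$, the uniform-in-$a$ derivative bounds from property (iii), and the vanishing Gaussian tail of the $L^q(\gamma)$ functions $x^k\partial^{\ell}g$. The only difference is presentational — you estimate each Leibniz term separately, while the paper bundles them into a single bound $c\,\mathbf{1}_{\mathrm{G}_a}\sum_{\ell\le p}\abs{\partial^{p-\ell}g}$ — so there is nothing to change.
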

\begin{proof}
The Lebniz's rule implies that
   \begin{align*}
     \partial^p g_a   & =\sum_{l\le p}{p \choose l} \partial^l \beta_a \partial^{p-l} g.
   \end{align*}
Denote $\mathrm{G}_a=\Rnum^n - \mathrm{B}_{a}$, it follows from (i)-(iii) of Notation~\ref{ntt1} that
   \begin{align}\label{aaa}
    \abs{ \partial^p g_a -\partial^p g  }     & = \mathbf{1}_{\mathrm{G}_a}\abs{ (\beta_a-1)\partial^p g + \sum_{0< l\le p}{p \choose l} \partial^l \beta_a \partial^{p-l} g }\nonumber \\
    & \le  c\times \mathbf{1}_{\mathrm{G}_a} \sum_{l\le p}\abs{  \partial^{p-l} g},
   \end{align} where $c=\max\limits_{0<l\le p}{c(n,p-l)}\times \max{{p \choose l}}$.

   Since $g\in C^{\infty}_{\uparrow}(\Rnum^n)$, we have $h(x):={x}^k \sum_{l\le p} \abs{ \partial^{p-l} g} \in C^{\infty}_{\uparrow}(\Rnum^n)\subset L^q(\gamma)$. Therefore, $h\mathbf{1}_{\mathrm{G}_a}\to 0$ in $L^q(\gamma)$ as $a\to \infty$. Together with (\ref{aaa}), we have that $\lim\limits_{a \to \infty} {x}^k \partial^p g_a ={x}^k \partial^p g$ in $L^q(\gamma)$.
\end{proof}

\begin{prop}\label{pp47}
{\bf (Approximation of identity of $C^m_{\uparrow}(\Rnum^n)$ in $L^q(\gamma)$)}\\
      Suppose that $f(x)\in C^m_{\uparrow}(\Rnum^n)$. Denote
       \begin{equation*}
     B_{\epsilon}f=  \beta_{\frac{1}{\epsilon}}\times A_{\epsilon}f.
\end{equation*}
Then $B_{\epsilon}f\in C_c^{\infty}(\Rnum^n)$, and for $q\ge 1$ and $k, p\in \Nnum^n$ such that $\abs{p}\le m$, ${x}^k\partial^p( B_{\epsilon}f)\to {x}^k\partial^p f $ in $L^q(\gamma)$, as $\epsilon \to 0$.
\end{prop}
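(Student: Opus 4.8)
The plan is to bootstrap from the two approximation results already in hand—Corollary~\ref{cc1} for the mollification $A_\epsilon f$ and the cutoff mechanism of Lemma~\ref{ll22}—while coping with the fact that here the mollifier $A_\epsilon$ and the cutoff $\beta_{1/\epsilon}$ both depend on $\epsilon$ at once. First I would settle the regularity claim. Since $f\in C^m_\uparrow(\Rnum^n)\subset C^0_\uparrow(\Rnum^n)$, Lemma~\ref{ll} gives $A_\epsilon f\in C^\infty_\uparrow(\Rnum^n)$; as $\beta_{1/\epsilon}\in C^\infty_c(\Rnum^n)$, the product $B_\epsilon f=\beta_{1/\epsilon}A_\epsilon f$ is smooth with support contained in that of $\beta_{1/\epsilon}$, hence $B_\epsilon f\in C^\infty_c(\Rnum^n)$.

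For the convergence I would split
\[
x^k\partial^p(B_\epsilon f)-x^k\partial^p f=\big[x^k\partial^p(B_\epsilon f)-x^k\partial^p(A_\epsilon f)\big]+\big[x^k\partial^p(A_\epsilon f)-x^k\partial^p f\big].
\]
The second bracket tends to $0$ in $L^q(\gamma)$ directly by Corollary~\ref{cc1} (using $\abs{p}\le m$). For the first bracket, Leibniz's rule yields
\[
\partial^p(B_\epsilon f)-\partial^p(A_\epsilon f)=(\beta_{1/\epsilon}-1)\partial^p(A_\epsilon f)+\sum_{0<l\le p}\binom{p}{l}\partial^l\beta_{1/\epsilon}\,\partial^{p-l}(A_\epsilon f).
\]
Because $\beta_{1/\epsilon}-1$ and each $\partial^l\beta_{1/\epsilon}$ with $l>0$ vanish on $\mathrm{B}_{1/\epsilon}$ and are bounded by constants $c(n,l)$ that do not depend on $\epsilon$ (properties (i)--(iii) of Notation~\ref{ntt1}), I obtain, with $\mathrm{G}_{1/\epsilon}=\Rnum^n-\mathrm{B}_{1/\epsilon}$, the pointwise bound
\[
\abs{x^k\partial^p(B_\epsilon f)-x^k\partial^p(A_\epsilon f)}\le c\,\mathbf{1}_{\mathrm{G}_{1/\epsilon}}\sum_{l\le p}\abs{x^k\partial^{p-l}(A_\epsilon f)}.
\]
The whole matter thus reduces to showing $\mathbf{1}_{\mathrm{G}_{1/\epsilon}}\,x^k\partial^{p-l}(A_\epsilon f)\to 0$ in $L^q(\gamma)$ for each $l\le p$.

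To prove this I would set $h_\epsilon=x^k\partial^{p-l}(A_\epsilon f)$ and $h=x^k\partial^{p-l}f$, note that $\abs{p-l}\le m$ so $h_\epsilon\to h$ in $L^q(\gamma)$ by Corollary~\ref{cc1}, and estimate
\[
\norm{\mathbf{1}_{\mathrm{G}_{1/\epsilon}}h_\epsilon}_{L^q(\gamma)}\le\norm{\mathbf{1}_{\mathrm{G}_{1/\epsilon}}(h_\epsilon-h)}_{L^q(\gamma)}+\norm{\mathbf{1}_{\mathrm{G}_{1/\epsilon}}h}_{L^q(\gamma)}\le\norm{h_\epsilon-h}_{L^q(\gamma)}+\norm{\mathbf{1}_{\mathrm{G}_{1/\epsilon}}h}_{L^q(\gamma)}.
\]
The first term vanishes as $\epsilon\to 0$ by Corollary~\ref{cc1}. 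For the second, since $1/\epsilon\to\infty$ the indicator $\mathbf{1}_{\mathrm{G}_{1/\epsilon}}$ tends to $0$ pointwise and is dominated by $1$, so $\mathbf{1}_{\mathrm{G}_{1/\epsilon}}h\to 0$ in $L^q(\gamma)$ by Lebesgue's dominated convergence theorem, using $h\in L^q(\gamma)$ (which holds because $x^k\partial^{p-l}f\in C^0_\uparrow(\Rnum^n)\subset L^q(\gamma)$).

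The main obstacle is exactly this simultaneous dependence on $\epsilon$: one cannot invoke Lemma~\ref{ll22} as a black box, because that lemma freezes the function $g$ while sending the cutoff radius to infinity, whereas here the argument $A_\epsilon f$ itself moves. The decomposition $h_\epsilon=(h_\epsilon-h)+h$ is what resolves the difficulty, since it absorbs the $\epsilon$-dependence of the function into the $L^q$-convergence $h_\epsilon\to h$ and isolates a fixed limit $h$ on which the shrinking tail $\mathbf{1}_{\mathrm{G}_{1/\epsilon}}$ can be controlled by dominated convergence.
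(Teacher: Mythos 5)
Your proof is correct, and at the top level it follows the same skeleton as the paper's own proof: establish $B_\epsilon f\in C_c^\infty(\Rnum^n)$ from the smoothness of $A_\epsilon f$ (Lemma~\ref{ll}) and the cutoff, then insert $x^k\partial^p(A_\epsilon f)$ as a middle term, controlling $\norm{x^k\partial^p(A_\epsilon f)-x^k\partial^p f}_{q}$ by Corollary~\ref{cc1}. The difference lies in how the remaining bracket $\norm{x^k\partial^p(B_\epsilon f)-x^k\partial^p(A_\epsilon f)}_{q}$ is treated. The paper disposes of it by citing Lemma~\ref{ll22} directly; but, exactly as you observe, Lemma~\ref{ll22} is stated for a \emph{fixed} $g\in C^\infty_\uparrow(\Rnum^n)$ as the cutoff radius $a\to\infty$, whereas here the function $A_\epsilon f$ and the radius $1/\epsilon$ move simultaneously, so the lemma as stated does not literally apply. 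Your repair is the right one: you re-run the Leibniz/indicator estimate from the proof of Lemma~\ref{ll22}, noting that its constants are independent of $\epsilon$ by property (iii) of Notation~\ref{ntt1}, and then decompose $h_\epsilon=(h_\epsilon-h)+h$ with $h_\epsilon=x^k\partial^{p-l}(A_\epsilon f)$ and $h=x^k\partial^{p-l}f$, so that Corollary~\ref{cc1} absorbs the $\epsilon$-dependence of the function while dominated convergence (using $h\in C^0_\uparrow(\Rnum^n)\subset L^q(\gamma)$) kills the tail term $\mathbf{1}_{\mathrm{G}_{1/\epsilon}}h$. In effect you have proved a version of Lemma~\ref{ll22} that is uniform over the family $\set{A_\epsilon f}$, which is precisely the uniformity the paper's one-line citation leaves implicit. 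So your argument is not merely equivalent to the paper's: it is a more careful rendering that closes a small gap in the published proof, at the cost of a modest amount of extra work.
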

\begin{proof}
Lemma~\ref{ll} implies that $A_{\epsilon}f\in C^{\infty}_{\uparrow}(\Rnum^n)$. Then it follows from Lemma~\ref{ll22} that  $B_{\epsilon}f\in C_c^{\infty}(\Rnum^n)$ and $ \norm{{x}^k\partial^p( B_{\epsilon}f)- {x}^k\partial^p (A_{\epsilon}f)}_{q}\to 0$. Corollary~\ref{cc1} implies that $ \norm{ {x}^k\partial^p (A_{\epsilon}f)- {x}^k\partial^p f}_{q}\to 0$. By the triangle inequality, we have
   \begin{equation*}
      \norm{{x}^k\partial^p( B_{\epsilon}f)- {x}^k\partial^p f}_{q}\le \norm{{x}^k\partial^p( B_{\epsilon}f)- {x}^k\partial^p (A_{\epsilon}f)}_{q}+ \norm{ {x}^k\partial^p (A_{\epsilon}f)- {x}^k\partial^p f}_{q}\to 0.
   \end{equation*}
\end{proof}

\subsection{ The N-representation theorem for $S(\Rnum^n) $ in  $L^2(\gamma)$}
Suppose $H_l(x)=\frac{(-1)^l}{\sqrt{l!}} e^{x^2/2}\frac{\dif^l}{\dif x^l}e^{-x^2/2}$ is the $l$-th Hermite polynomial of one variable. It is well known that the set of Hermite polynomials of several variables
\begin{equation}\label{bfh}
  \set{ \mathbf{H}_{m}:=\prod_{k=1}^{n} H_{m_k}(x_k),\quad m=(m_1,\dots,m_n)\in \Nnum^n }
\end{equation}
is an orthonormal basis of  $L^2(\gamma)$. Thus, every function $u\in L^2(\gamma)$ has a unique series expression
\begin{equation}
   u=\sum_{m\in \Nnum^n} a_m \mathbf{H}_{m},
\end{equation} where the coefficients $a_{m}$ are given by
\begin{equation*}
a_m=\int_{\Rnum^n}u(x)\mathbf{H}_{m}(x)\,\dif \gamma(x).
\end{equation*}
\begin{prop}\label{tm57}
   $u\in L^2(\gamma)$ satisfies that $a_m=\int_{\Rnum^n}u(x)\mathbf{H}_{m}(x)\,\dif \gamma(x)$ is rapidly decreasing (i.e., for $r\in \Nnum^n\ge 0$, $a_m=O(m^{-r})$ as $\abs{m}\to \infty$) if and only if $u=f\rho^{-\frac12}$ with $f\in S(\Rnum^n)$.
\end{prop}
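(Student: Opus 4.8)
The plan is to transfer the problem from the Gaussian space $L^2(\gamma)$ to the flat space $L^2(\Rnum^n,\dif x)$ via the Hermite functions, and there to use a harmonic-oscillator operator to convert rapid decay of coefficients into Schwartz regularity. Set $\psi_m=\mathbf H_m\,\rho^{1/2}$. Since $\int\psi_m\psi_{m'}\,\dif x=\int\mathbf H_m\mathbf H_{m'}\,\dif\gamma=\delta_{m,m'}$, and since $u\mapsto u\rho^{1/2}$ is a unitary map $L^2(\gamma)\to L^2(\Rnum^n,\dif x)$ carrying the orthonormal basis $\{\mathbf H_m\}$ to $\{\psi_m\}$, the family $\{\psi_m\}$ is an orthonormal basis of $L^2(\Rnum^n,\dif x)$. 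Writing $f=u\rho^{1/2}$ (so $u=f\rho^{-1/2}$), a direct computation using $\dif\gamma=\rho\,\dif x$ and $\mathbf H_m=\psi_m\rho^{-1/2}$ gives $a_m=\int_{\Rnum^n}f\,\psi_m\,\dif x$, the $L^2(\dif x)$ pairing. Thus the proposition is equivalent to the classical statement that the coefficients $\innp{f,\,\psi_m}$ of $f\in L^2(\Rnum^n,\dif x)$ are rapidly decreasing iff $f\in S(\Rnum^n)$. The engine is the operator $\mathcal H=-\Delta+\tfrac14\abs{x}^2$ with its ladder operators $A_j=\tfrac12 x_j+\partial_{x_j}$, $A_j^*=\tfrac12 x_j-\partial_{x_j}$; one checks $\mathcal H\psi_m=(\abs m+\tfrac n2)\psi_m=:\lambda_m\psi_m$, $A_j\psi_m=\sqrt{m_j}\,\psi_{m-e_j}$, $A_j^*\psi_m=\sqrt{m_j+1}\,\psi_{m+e_j}$, together with the inversions $x_j=A_j+A_j^*$ and $\partial_{x_j}=\tfrac12(A_j-A_j^*)$.

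For the easy direction (Schwartz $\Rightarrow$ rapid decay): if $f\in S$ then $\mathcal H^k f\in S\subset L^2$ for every $k$, since $\mathcal H$ has polynomial coefficients and preserves $S$. Integrating by parts (all boundary terms vanish because $f,\psi_m\in S$) gives $\innp{\mathcal H^k f,\,\psi_m}=\innp{f,\,\mathcal H^k\psi_m}=\lambda_m^k a_m$, so by Bessel's inequality $\sum_m\lambda_m^{2k}\abs{a_m}^2=\norm{\mathcal H^k f}_{L^2}^2<\infty$. In particular $\abs{a_m}\le\norm{\mathcal H^k f}_{L^2}\lambda_m^{-k}$, and since $k$ is arbitrary and $\lambda_m\sim\abs m$, the sequence $a_m$ is rapidly decreasing.

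For the hard direction (rapid decay $\Rightarrow$ Schwartz), set $f=\sum_m a_m\psi_m$, which converges in $L^2$. The key point is the weighted estimate: for multi-indices $\alpha,\beta$ with $N=\abs\alpha+\abs\beta$, the operator $x^\alpha\partial^\beta$ expands, via $x_j=A_j+A_j^*$ and $\partial_{x_j}=\tfrac12(A_j-A_j^*)$, into a finite sum of monomials of length $N$ in the $A_j,A_j^*$. Each such monomial sends $\psi_m$ to a fixed-shift $\psi_{m+v}$ ($v$ a constant vector with $\abs v\le N$, and the term vanishes whenever a step would lower an index below $0$) times a coefficient bounded by $\prod\sqrt{\abs m+N+1}\lesssim\lambda_m^{N/2}$, since each ladder step contributes a factor $\le\sqrt{\abs m+N+1}$. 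Reindexing then yields $\norm{x^\alpha\partial^\beta f}_{L^2}^2\lesssim\sum_m\lambda_m^{N}\abs{a_m}^2$, which is finite by rapid decay: choosing the decay rate $r>(N+n)/2$ makes the sum converge, using $\#\set{m:\abs m=j}\lesssim j^{n-1}$. Hence $x^\alpha\partial^\beta f\in L^2$ for all $\alpha,\beta$.

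It remains to upgrade ``$x^\alpha\partial^\beta f\in L^2$ for all $\alpha,\beta$'' to ``$f\in S$,'' which is where the main obstacle lies and where I would be most careful. First, $\partial^\beta f\in L^2$ for all $\beta$ gives $f\in\bigcap_s H^s(\Rnum^n)$, so by Sobolev embedding $f\in C^\infty$. Second, I would invoke the standard fact that the $L^2$-based seminorms $f\mapsto\norm{x^\alpha\partial^\beta f}_{L^2}$ generate the same topology on $S(\Rnum^n)$ as the sup-norm seminorms $f\mapsto\sup_x\abs{x^\alpha\partial^\beta f}$; concretely, each $\sup_x\abs{x^\alpha\partial^\beta f}$ is controlled, via the one-dimensional fundamental theorem of calculus (equivalently, Sobolev embedding), by finitely many of the quantities $\norm{x^{\alpha'}\partial^{\beta'}f}_{L^2}$, all just shown to be finite. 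Therefore $f\in S(\Rnum^n)$, completing the equivalence and hence the proposition. The delicate steps to watch are the bookkeeping of the coefficient bound $O(\lambda_m^{N/2})$ under products of ladder operators and the final passage from weighted-$L^2$ control to the Schwartz seminorms.
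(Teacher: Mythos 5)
Your proof is correct, and its opening move is exactly the paper's: both pass from $u\in L^2(\gamma)$ to $f=u\rho^{1/2}\in L^2(\Rnum^n,\dif x)$ and observe that the Gaussian coefficients $a_m=\innp{u,\,\mathbf{H}_m}_{L^2(\gamma)}$ coincide with the coefficients $\int_{\Rnum^n} f\psi_m\,\dif x$ of $f$ against the Hermite functions $\psi_m=\mathbf{H}_m\rho^{1/2}$. The divergence is what happens next: the paper stops there and simply cites the classical equivalence (rapidly decreasing Hermite-function coefficients if and only if Schwartz) from Jones's book on generalised functions, i.e.\ the N-representation theorem, whereas you prove that equivalence from scratch using the harmonic oscillator $\mathcal{H}=-\Delta+\tfrac14\abs{x}^2$ and its ladder operators. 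Your computations are consistent with the paper's normalization of $H_l$: one does have $A_j\psi_m=\sqrt{m_j}\,\psi_{m-e_j}$, $A_j^*\psi_m=\sqrt{m_j+1}\,\psi_{m+e_j}$ and $\mathcal{H}\psi_m=(\abs{m}+\tfrac n2)\psi_m$ for these $\psi_m$; the counting condition $r>(N+n)/2$ is right; and the upgrade from weighted $L^2$ bounds to the sup-norm Schwartz seminorms via Sobolev embedding plus Leibniz is sound. What the paper's route buys is brevity (a two-line reduction plus a citation, appropriate for an appendix); what yours buys is a self-contained argument that makes the mechanism explicit, namely that ladder operators trade multiplication and differentiation for index shifts weighted by $\lambda_m^{1/2}$. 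The one step you should tighten is the term-by-term application of $x^\alpha\partial^\beta$ to the $L^2$-convergent series $\sum_m a_m\psi_m$ in the hard direction: the clean justification is that the partial sums converge to $f$ in $S'(\Rnum^n)$, that $x^\alpha\partial^\beta$ is continuous on $S'(\Rnum^n)$, and that the differentiated partial sums converge in $L^2$ by your estimate, so the distributional derivative $x^\alpha\partial^\beta f$ equals that $L^2$ limit. This is routine, but it is the only place where your sketch silently interchanges an operator with an infinite sum.
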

\begin{proof} Denote the Hermite functions $\mathcal{H}_{m}(x)=\mathbf{H}_{m}(x)\rho^{\frac12}$, then
$\int_{\Rnum^n}u(x)\mathbf{H}_{m}(x)\,\dif \gamma(x)=\int_{\Rnum^n}f(x)\mathcal{H}_{m}(x)\,\dif x $. The desired conclusion is followed from Theorem~3.5 and Exercise 3 of \cite[pp135]{jon}.
\end{proof}

\begin{rem}\label{rm1}
   Clearly, the smooth and compactly supported function satisfies the above condition. In fact,
   \begin{equation*}
      C_c^{\infty}(\Rnum^n)=\set{u=f\rho^{-\frac12}:\,f\in  C_c^{\infty}(\Rnum^n)} \subset \set{ u=f\rho^{-\frac12}:\,f\in S(\Rnum^n)}.
   \end{equation*}
\end{rem}
\begin{prop}\label{tm410}
 If $u\in L^2(\gamma)$ satisfies that $a_m=\int_{\Rnum^n}u(x)\mathbf{H}_{m}(x)\,\dif \gamma(x)$ is rapidly decreasing, then the Hermite expansion $u(x)=\sum_{m\in \Nnum^n}a_m \mathbf{H}_m(x)$ satisfies that
   \begin{equation*}
     \norm {x^k\partial^p (u-u_l)}_{L^2(\gamma)} \to 0 {\text{ as $l\to \infty$,\quad}} \forall k,\,p\in\Nnum^n,
   \end{equation*}
   where $u_l=\sum_{\abs{m}\le l}a_m \mathbf{H}_m(x)$.
\end{prop}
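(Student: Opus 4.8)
The plan is to reduce the whole statement to how the two elementary operations — multiplication by a coordinate $x_j$ and differentiation $\partial_{x_j}$ — act on the orthonormal basis $\set{\mathbf{H}_m}$. Writing $e_j$ for the $j$-th unit multi-index and using the normalization $H_l=\frac{(-1)^l}{\sqrt{l!}}e^{x^2/2}\frac{\dif^l}{\dif x^l}e^{-x^2/2}$, the classical derivative formula and three-term recurrence for one-variable Hermite polynomials give $\frac{\dif}{\dif x}H_l=\sqrt{l}\,H_{l-1}$ and $xH_l=\sqrt{l+1}\,H_{l+1}+\sqrt{l}\,H_{l-1}$. Taking tensor products, I would record the two ladder relations
\begin{equation*}
\partial_{x_j}\mathbf{H}_m=\sqrt{m_j}\,\mathbf{H}_{m-e_j},\qquad x_j\mathbf{H}_m=\sqrt{m_j+1}\,\mathbf{H}_{m+e_j}+\sqrt{m_j}\,\mathbf{H}_{m-e_j},
\end{equation*}
with the convention $\mathbf{H}_{m-e_j}=0$ when $m_j=0$. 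The crucial feature is that each operation shifts the index by one unit in a single coordinate and multiplies by a factor of size at most $\sqrt{\abs{m}+1}$.

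Iterating these relations, I would show that applying the word $x^k\partial^p$ to $\mathbf{H}_m$ produces a finite linear combination
\begin{equation*}
x^k\partial^p\mathbf{H}_m=\sum_{m'}c^{(k,p)}_{m,m'}\,\mathbf{H}_{m'},
\end{equation*}
in which the only nonzero terms have $\abs{m'_j-m_j}\le k_j+p_j$ for every $j$ (so at most $\prod_j(k_j+p_j+1)$ terms occur), and in which, since $\abs{k}+\abs{p}$ ladder steps are taken and each contributes a factor controlled by $\sqrt{\abs{m}+\abs{k}+\abs{p}+1}$, the coefficients obey $\abs{c^{(k,p)}_{m,m'}}\le C(1+\abs{m})^{N}$ with $N=\tfrac{1}{2}(\abs{k}+\abs{p})$ and $C=C(k,p)$. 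Because $\set{\mathbf{H}_{m'}}$ is orthonormal in $L^2(\gamma)$, Parseval then yields the single clean estimate $\norm{x^k\partial^p\mathbf{H}_m}_{L^2(\gamma)}\le C'(1+\abs{m})^{N}$.

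With this bound in hand the convergence is immediate from the rapid decrease of the coefficients. I would estimate the tail of the series by the triangle inequality in $L^2(\gamma)$,
\begin{equation*}
\norm{x^k\partial^p(u-u_l)}_{L^2(\gamma)}\le\sum_{\abs{m}>l}\abs{a_m}\,\norm{x^k\partial^p\mathbf{H}_m}_{L^2(\gamma)}\le C'\sum_{\abs{m}>l}\abs{a_m}(1+\abs{m})^{N}.
\end{equation*}
Choosing in the rapid-decrease hypothesis an exponent $r>N+n$ gives $\abs{a_m}(1+\abs{m})^N\le C_r(1+\abs{m})^{N-r}$ with $N-r<-n$, so the full series $\sum_m\abs{a_m}(1+\abs{m})^N$ converges; hence its tail tends to $0$ as $l\to\infty$, which is exactly the assertion. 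Absolute convergence also shows that $\set{x^k\partial^p u_l}_l$ is Cauchy, so it has an $L^2(\gamma)$ limit $g$.

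The one point requiring care is to identify this limit $g$ with the classical object $x^k\partial^p u$. Here I would invoke Proposition~\ref{tm57}: the rapid decrease of $a_m$ forces $u=f\rho^{-1/2}$ with $f\in S(\Rnum^n)$, so $u$ is a genuine smooth function and $x^k\partial^p u$ is well defined pointwise. Since convergence in $L^2(\gamma)$ implies convergence in $L^2_{loc}(\dif x)$ (the density $\rho$ is bounded below on compacta) and hence in $\mathcal{D}'(\Rnum^n)$, and since $x^k\partial^p$ is continuous on $\mathcal{D}'$, the relations $u_l\to u$ and $x^k\partial^p u_l\to g$ force $g=x^k\partial^p u$ as distributions, thus as functions. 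The main obstacle is therefore the bookkeeping of the second step: tracking the bounded index shifts and verifying the polynomial growth $C(1+\abs{m})^N$ of the coefficients through $\abs{k}+\abs{p}$ successive applications of the ladder relations. Once this uniform bound is secured, the rapid decay of $a_m$ absorbs it and everything else is routine.
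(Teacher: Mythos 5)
Your proof is correct, but it takes a genuinely different route from the paper's. The paper transfers everything to the Hermite \emph{functions}: it sets $f_l=u_l\rho^{\frac12}$, $f=u\rho^{\frac12}$, invokes the N-representation theorem for $S(\Rnum^n)$ (Theorem V.13 of Reed--Simon) to conclude that rapidly decreasing coefficients force $f_l\to f$ in the Schwartz topology, i.e.\ $\norm{x^m\partial^i(f-f_l)}_{L^2(\dif x)}\to 0$ for all $m,i$, and then pulls this back to the weighted estimate $\norm{x^k\partial^p(u-u_l)}_{L^2(\gamma)}\le c\sum_{m\le k+p,\,i\le p}\norm{x^m\partial^i(f-f_l)}_{L^2(\dif x)}$ via Leibniz's rule (differentiating $\rho^{-\frac12}$ only produces polynomial factors, and $\norm{g\rho^{-\frac12}}_{L^2(\gamma)}=\norm{g}_{L^2(\dif x)}$). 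You instead stay entirely in $L^2(\gamma)$ and in effect re-prove the special case of that cited machinery that is actually needed: the ladder relations give the polynomial bound $\norm{x^k\partial^p\mathbf{H}_m}_{L^2(\gamma)}\le C(1+\abs{m})^{N}$ with $N=\frac12(\abs{k}+\abs{p})$, rapid decrease of $a_m$ (with exponent $r>N+n$ to beat the multiplicity of multi-indices of a given length) makes the tail sum vanish, and Proposition~\ref{tm57} plus a distributional argument identifies the $L^2(\gamma)$ limit of $x^k\partial^p u_l$ with the classical $x^k\partial^p u$. What the paper's route buys is brevity: the N-representation theorem absorbs both the growth estimates and the identification of the limit. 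What your route buys is self-containedness and an explicit quantitative tail bound; the price is the extra bookkeeping you acknowledge and the separate limit-identification step, which you handle correctly (rapid decrease gives $u=f\rho^{-\frac12}$ smooth, $L^2(\gamma)$ convergence implies $L^2_{loc}(\dif x)$ hence $\mathcal{D}'$ convergence, and $x^k\partial^p$ is $\mathcal{D}'$-continuous). One cosmetic remark: state the absolute convergence of $\sum_m a_m\,x^k\partial^p\mathbf{H}_m$ and the identification of its sum with $x^k\partial^p u$ \emph{before} writing the triangle inequality for $\norm{x^k\partial^p(u-u_l)}_{L^2(\gamma)}$, since otherwise that norm is not yet known to be finite.
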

\begin{proof} Proposition~\ref{tm57} implies that $ S(\Rnum^n)\ni f=\sum_m a_m \mathcal{H}_m(x)$. Denote $f_l=u_l\rho^{\frac12}$, then it follows from the N-representation theorem for $S(\Rnum^n) $ (see Theorem~V.13 of \cite[p143]{rs}) that $f_l\to f$ in  $S(\Rnum^n)$ which means that $ \norm{x^m\partial^i (f-f_l)}_{L^2(\dif x)}\to 0\quad \forall m,\,i\in\Nnum^n$ as $l\to \infty$.

The Lebniz's rule implies that there exists a constant $c>0$ such that $$\norm {x^k\partial^p (u-u_l)}_{L^2(\gamma)}\le c \sum_{m\le k+p,\,i\le p} \norm{x^m\partial^i (f-f_l)}_{L^2(\dif x)}.$$ Thus the desired conclusion follows.
\end{proof}




\begin{thebibliography}{10}

\bibitem{bn} Barros-Neto, J. 1973. {\it An Introduction to the Theory of Distributions}, Marcel Dekker, Inc., New York.

\bibitem{bd} Bell, D.R. 2006. {\it The Malliavin Calculus}, Dover Publications, Inc., Mineola, New York.

\bibitem{cl} Chen, Y. and  Liu, Y. 2014. On the eigenfunctions of the complex Ornstein-Uhlenbeck operators, Kyoto J. Math.,Vol.54, No.3, 577-596.



\bibitem{ito}It\^{o}, K. 1953. Complex Multiple Wiener Integral, Japan J.Math. 22, 63-86.
Reprinted in: {\it Kiyosi It\^{o} selected papers}, Edited by Daniel W. Stroock,
S.R.S. Varadhan, Springer-Verlag, 1987.


\bibitem{jon} Jones, D.S. 1982. {\it	The theory of generalised functions}, 2nd ed, Cambridge University Press.

\bibitem{guo} Kuo, H. H. 2006. {\it Introduction to Stochastic Integration}, Springer.

\bibitem{lun}Lunardi, A. 1997. On the Ornstein-Uhlenbeck operator in $L^2$ spaces with respect to invariant measures, Trans. Amer. Math. Soc. Vol. 349, 1, 155-169.

\bibitem{Ma} Malliavin, P. 1997. {\it Stochasitc Analysis}, Springer.

J. Funct. Anal., Vol. 196, 1, 40-60.


\bibitem{Nua} Nualart, D. 2006. {\it The Malliavin Calculus and Related Topics}, Springer-Verlag.


\bibitem{Pazy} Pazy, A. 1983. {\it Semigroups of Linear Operators and
Applications to Partial Differential Equations}. AMS \textbf{44}.
Springer.

\bibitem{riesz} Riesz, F. and Nagy, B.S. 1955. {\it Functional analysis}, Dover Publications, Inc., New York.

\bibitem{rs}Reed, M. and Simon, B. 1975 {\it Methods of Modern Mathematical Physics,} New York: Academic Press, Vol. 1.

\bibitem{ru}Rudin, W. 1991. {\it Functional Analysis,} 2-nd Ed., McGraw-Hill Companies, Inc.

\bibitem{shg} Shigekawa, I. 1998. {\it Stochasitc Analysis}, Translations of Mathematical Monographs, Vol.224.

\bibitem{st} Stein, E.M. and Shakarchi, R. 2003. {\it Complex Analysis}, Princeton University Press, Princeton and Oxford.

\bibitem{str0} Stroock, D.W. 1981. The Malliavin calculus, a functional analytic approach, J. Funct. Anal., Vol.44, 212-257.

\bibitem{str} Stroock, D.W. 1983. Some applications of stochastic calculus to partial differential equations. In: {\it Ecole d'Et\'{e} de Probabilit\'{e}s de Saint-Flour XI-1981. P.L. Hennequin (Ed.). Lecture Notes in Math.}, 976, pp. 268-380, Springer-Verlag.

\end{thebibliography}
\end{document}